\chardef\@x10\chardef\@xv60
\def\tcitime{
\def\@time{%
  \@minute\time\@hour\@minute\divide\@hour\@xv
  \ifnum\@hour<\@x 0\fi\the\@hour:%
  \multiply\@hour\@xv\advance\@minute-\@hour
  \ifnum\@minute<\@x 0\fi\the\@minute
  }}%
\def\QCTOpt[#1]#2{%
  \def\QCTOptB{#1}
  \def\QCTOptA{#2}
}
\def\QCTNOpt#1{%
  \def\QCTOptA{#1}
  \let\QCTOptB\empty
}
\def\Qct{%
  \@ifnextchar[{%
    \QCTOpt}{\QCTNOpt}
}
\def\QCBOpt[#1]#2{%
  \def\QCBOptB{#1}
  \def\QCBOptA{#2}
}
\def\QCBNOpt#1{%
  \def\QCBOptA{#1}
  \let\QCBOptB\empty
}
\def\Qcb{%
  \@ifnextchar[{%
    \QCBOpt}{\QCBNOpt}
}
\def\PrepCapArgs{%
  \ifx\QCBOptA\empty
    \ifx\QCTOptA\empty
      {}%
    \else
      \ifx\QCTOptB\empty
        {\QCTOptA}%
      \else
        [\QCTOptB]{\QCTOptA}%
      \fi
    \fi
  \else
    \ifx\QCBOptA\empty
      {}%
    \else
      \ifx\QCBOptB\empty
        {\QCBOptA}%
      \else
        [\QCBOptB]{\QCBOptA}%
      \fi
    \fi
  \fi
}
\def\GRAPHICSPS#1{%
 \ifcase\GRAPHICSTYPE%\GRAPHICSTYPE=0
   \special{ps: #1}%
 \or%\GRAPHICSTYPE=1
   \special{language "PS", include "#1"}%
%%%\or%\GRAPHICSTYPE=2
%%%  #1%
 \fi
}%
\def\graffile#1#2#3#4{%
%%% \ifnum\GRAPHICSTYPE=\tw@
%%%  %Following if using psfig
%%%  \@ifundefined{psfig}{\input psfig.tex}{}%
%%%  \psfig{file=#1, height=#3, width=#2}%
%%% \else
  %Following for all others
  % JCS - added BOXTHEFRAME, see below
    \leavevmode
    \raise -#4 \BOXTHEFRAME{%
        \hbox to #2{\raise #3\hbox to #2{\null #1\hfil}}}%
}%
\def\draftbox#1#2#3#4{%
 \leavevmode\raise -#4 \hbox{%
  \frame{\rlap{\protect\tiny #1}\hbox to #2%
   {\vrule height#3 width\z@ depth\z@\hfil}%
  }%
 }%
}%
\newif\ifwasdraft
\def\GRAPHIC#1#2#3#4#5{%
 \ifnum\draft=\@ne\draftbox{#2}{#3}{#4}{#5}%
  \else\graffile{#1}{#3}{#4}{#5}%
  \fi
 }%
\def\addtoLaTeXparams#1{%
    \edef\LaTeXparams{\LaTeXparams #1}}%
\newif\ifBoxFrame \BoxFramefalse
\newif\ifOverFrame \OverFramefalse
\newif\ifUnderFrame \UnderFramefalse
\def\BOXTHEFRAME#1{%
   \hbox{%
      \ifBoxFrame
         \frame{#1}%
      \else
         {#1}%
      \fi
   }%
}
\def\doFRAMEparams#1{\BoxFramefalse\OverFramefalse\UnderFramefalse\readFRAMEparams#1\end}%
\def\readFRAMEparams#1{%
 \ifx#1\end%
  \let\next=\relax
  \else
  \ifx#1i\dispkind=\z@\fi
  \ifx#1d\dispkind=\@ne\fi
  \ifx#1f\dispkind=\tw@\fi
  \ifx#1t\addtoLaTeXparams{t}\fi
  \ifx#1b\addtoLaTeXparams{b}\fi
  \ifx#1p\addtoLaTeXparams{p}\fi
  \ifx#1h\addtoLaTeXparams{h}\fi
  \ifx#1X\BoxFrametrue\fi
  \ifx#1O\OverFrametrue\fi
  \ifx#1U\UnderFrametrue\fi
  \ifx#1w
    \ifnum\draft=1\wasdrafttrue\else\wasdraftfalse\fi
    \draft=\@ne
  \fi
  \let\next=\readFRAMEparams
  \fi
 \next
 }%
\def\IFRAME#1#2#3#4#5#6{%
      \bgroup
      \let\QCTOptA\empty
      \let\QCTOptB\empty
      \let\QCBOptA\empty
      \let\QCBOptB\empty
      #6%
      \parindent=0pt%
      \leftskip=0pt
      \rightskip=0pt
      \setbox0 = \hbox{\QCBOptA}%
      \@tempdima = #1\relax
      \ifOverFrame
          % Do this later
          \typeout{This is not implemented yet}%
          \show\HELP
      \else
         \ifdim\wd0>\@tempdima
            \advance\@tempdima by \@tempdima
            \ifdim\wd0 >\@tempdima
               \textwidth=\@tempdima
               \setbox1 =\vbox{%
                  \noindent\hbox to \@tempdima{\hfill\GRAPHIC{#5}{#4}{#1}{#2}{#3}\hfill}\\%
                  \noindent\hbox to \@tempdima{\parbox[b]{\@tempdima}{\QCBOptA}}%
               }%
               \wd1=\@tempdima
            \else
               \textwidth=\wd0
               \setbox1 =\vbox{%
                 \noindent\hbox to \wd0{\hfill\GRAPHIC{#5}{#4}{#1}{#2}{#3}\hfill}\\%
                 \noindent\hbox{\QCBOptA}%
               }%
               \wd1=\wd0
            \fi
         \else
            %\show\BBB
            \ifdim\wd0>0pt
              \hsize=\@tempdima
              \setbox1 =\vbox{%
                \unskip\GRAPHIC{#5}{#4}{#1}{#2}{0pt}%
                \break
                \unskip\hbox to \@tempdima{\hfill \QCBOptA\hfill}%
              }%
              \wd1=\@tempdima
           \else
              \hsize=\@tempdima
              \setbox1 =\vbox{%
                \unskip\GRAPHIC{#5}{#4}{#1}{#2}{0pt}%
              }%
              \wd1=\@tempdima
           \fi
         \fi
         \@tempdimb=\ht1
         \advance\@tempdimb by \dp1
         \advance\@tempdimb by -#2%
         \advance\@tempdimb by #3%
         \leavevmode
         \raise -\@tempdimb \hbox{\box1}%
      \fi
      \egroup%
}%
\def\DFRAME#1#2#3#4#5{%
 \begin{center}
     \let\QCTOptA\empty
     \let\QCTOptB\empty
     \let\QCBOptA\empty
     \let\QCBOptB\empty
     \ifOverFrame 
        #5\QCTOptA\par
     \fi
     \GRAPHIC{#4}{#3}{#1}{#2}{\z@}
     \ifUnderFrame 
        \nobreak\par #5\QCBOptA
     \fi
 \end{center}%
 }%
\def\FFRAME#1#2#3#4#5#6#7{%
 \begin{figure}[#1]%
  \let\QCTOptA\empty
  \let\QCTOptB\empty
  \let\QCBOptA\empty
  \let\QCBOptB\empty
  \ifOverFrame
    #4
    \ifx\QCTOptA\empty
    \else
      \ifx\QCTOptB\empty
        \caption{\QCTOptA}%
      \else
        \caption[\QCTOptB]{\QCTOptA}%
      \fi
    \fi
    \ifUnderFrame\else
      \label{#5}%
    \fi
  \else
    \UnderFrametrue%
  \fi
  \begin{center}\GRAPHIC{#7}{#6}{#2}{#3}{\z@}\end{center}%
  \ifUnderFrame
    #4
    \ifx\QCBOptA\empty
      \caption{}%
    \else
      \ifx\QCBOptB\empty
        \caption{\QCBOptA}%
      \else
        \caption[\QCBOptB]{\QCBOptA}%
      \fi
    \fi
    \label{#5}%
  \fi
  \end{figure}%
 }%
\def\makeactives{
  \catcode`\"=\active
  \catcode`\;=\active
  \catcode`\:=\active
  \catcode`\'=\active
  \catcode`\~=\active
}
   \gdef\activesoff{%
      \def"{\string"}
      \def;{\string;}
      \def:{\string:}
      \def'{\string'}
      \def~{\string~}
      %\bbl@deactivate{"}%
      %\bbl@deactivate{;}%
      %\bbl@deactivate{:}%
      %\bbl@deactivate{'}%
    }
\def\FRAME#1#2#3#4#5#6#7#8{%
 \bgroup
 \@ifundefined{bbl@deactivate}{}{\activesoff}
 \ifnum\draft=\@ne
   \wasdrafttrue
 \else
   \wasdraftfalse%
 \fi
 \def\LaTeXparams{}%
 \dispkind=\z@
 \def\LaTeXparams{}%
 \doFRAMEparams{#1}%
 \ifnum\dispkind=\z@\IFRAME{#2}{#3}{#4}{#7}{#8}{#5}\else
  \ifnum\dispkind=\@ne\DFRAME{#2}{#3}{#7}{#8}{#5}\else
   \ifnum\dispkind=\tw@
    \edef\@tempa{\noexpand\FFRAME{\LaTeXparams}}%
    \@tempa{#2}{#3}{#5}{#6}{#7}{#8}%
    \fi
   \fi
  \fi
  \ifwasdraft\draft=1\else\draft=0\fi{}%
  \egroup
 }%
\def\TEXUX#1{"texux"}
\long\def\QQQ#1#2{%
     \long\expandafter\def\csname#1\endcsname{#2}}%
\long\def\QQA#1#2{}%
\def\QTR#1#2{{\csname#1\endcsname #2}}%(gp) Is this the best?
\def\EXPAND#1[#2]#3{}%
\def\NOEXPAND#1[#2]#3{}%
\def\LaTeXparent#1{}%
\def\ChildStyles#1{}%
\def\ChildDefaults#1{}%
\def\QTagDef#1#2#3{}%
\def\QQfnmark#1{\footnotemark}
\def\makeatletter\input gnuindex.sty\makeatother\makeindex{\makeatletter\input gnuindex.sty\makeatother\makeindex}%	
\def\initial#1{\bigbreak{\raggedright\large\bf #1}\kern 2\p@\penalty3000}}%
 \def\abstract{%
  \if@twocolumn
   \section*{Abstract (Not appropriate in this style!)}%
   \else \small 
   \begin{center}{\bf Abstract\vspace{-.5em}\vspace{\z@}}\end{center}%
   \quotation 
   \fi
  }%
   \def\registered{\relax\ifmmode{}\r@gistered
                    \else$\m@th\r@gistered$\fi}%
 \def\r@gistered{^{\ooalign
  {\hfil\raise.07ex\hbox{$\scriptstyle\rm\text{R}$}\hfil\crcr
  \mathhexbox20D}}}}{}%
\newdimen\theight
\def\Column{%
 \vadjust{\setbox\z@=\hbox{\scriptsize\quad\quad tcol}%
  \theight=\ht\z@\advance\theight by \dp\z@\advance\theight by \lineskip
  \kern -\theight \vbox to \theight{%
   \rightline{\rlap{\box\z@}}%
   \vss
   }%
  }%
 }%
\def\qed{%
 \ifhmode\unskip\nobreak\fi\ifmmode\ifinner\else\hskip5\p@\fi\fi
 \hbox{\hskip5\p@\vrule width4\p@ height6\p@ depth1.5\p@\hskip\p@}%
 }%
\def\miss{\hbox{\vrule height2\p@ width 2\p@ depth\z@}}%
\def\tcol#1{{\baselineskip=6\p@ \vcenter{#1}} \Column}  %
\def\newfmtname{LaTeX2e}
\def\chkcompat{%
   \if@compatibility
   \else
     \usepackage{latexsym}
   \fi
}
  \DeclareOldFontCommand{\rm}{\normalfont\rmfamily}{\mathrm}
  \DeclareOldFontCommand{\sf}{\normalfont\sffamily}{\mathsf}
  \DeclareOldFontCommand{\tt}{\normalfont\ttfamily}{\mathtt}
  \DeclareOldFontCommand{\bf}{\normalfont\bfseries}{\mathbf}
  \DeclareOldFontCommand{\it}{\normalfont\itshape}{\mathit}
  \DeclareOldFontCommand{\sl}{\normalfont\slshape}{\@nomath\sl}
  \DeclareOldFontCommand{\sc}{\normalfont\scshape}{\@nomath\sc}
\def\alpha{{\Greekmath 010B}}%
\def\beta{{\Greekmath 010C}}%
\def\gamma{{\Greekmath 010D}}%
\def\delta{{\Greekmath 010E}}%
\def\epsilon{{\Greekmath 010F}}%
\def\zeta{{\Greekmath 0110}}%
\def\eta{{\Greekmath 0111}}%
\def\theta{{\Greekmath 0112}}%
\def\iota{{\Greekmath 0113}}%
\def\kappa{{\Greekmath 0114}}%
\def\lambda{{\Greekmath 0115}}%
\def\mu{{\Greekmath 0116}}%
\def\nu{{\Greekmath 0117}}%
\def\xi{{\Greekmath 0118}}%
\def\pi{{\Greekmath 0119}}%
\def\rho{{\Greekmath 011A}}%
\def\sigma{{\Greekmath 011B}}%
\def\tau{{\Greekmath 011C}}%
\def\upsilon{{\Greekmath 011D}}%
\def\phi{{\Greekmath 011E}}%
\def\chi{{\Greekmath 011F}}%
\def\psi{{\Greekmath 0120}}%
\def\omega{{\Greekmath 0121}}%
\def\varepsilon{{\Greekmath 0122}}%
\def\vartheta{{\Greekmath 0123}}%
\def\varpi{{\Greekmath 0124}}%
\def\varrho{{\Greekmath 0125}}%
\def\varsigma{{\Greekmath 0126}}%
\def\varphi{{\Greekmath 0127}}%
\def\nabla{{\Greekmath 0272}}
\def\FindBoldGroup{%
   {\setbox0=\hbox{$\mathbf{x\global\edef\theboldgroup{\the\mathgroup}}$}}%
}
\def\Greekmath#1#2#3#4{%
    \if@compatibility
        \ifnum\mathgroup=\symbold
           \mathchoice{\mbox{\boldmath$\displaystyle\mathchar"#1#2#3#4$}}%
                      {\mbox{\boldmath$\textstyle\mathchar"#1#2#3#4$}}%
                      {\mbox{\boldmath$\scriptstyle\mathchar"#1#2#3#4$}}%
                      {\mbox{\boldmath$\scriptscriptstyle\mathchar"#1#2#3#4$}}%
        \else
           \mathchar"#1#2#3#4% 
        \fi 
    \else 
        \FindBoldGroup
        \ifnum\mathgroup=\theboldgroup % For 2e
           \mathchoice{\mbox{\boldmath$\displaystyle\mathchar"#1#2#3#4$}}%
                      {\mbox{\boldmath$\textstyle\mathchar"#1#2#3#4$}}%
                      {\mbox{\boldmath$\scriptstyle\mathchar"#1#2#3#4$}}%
                      {\mbox{\boldmath$\scriptscriptstyle\mathchar"#1#2#3#4$}}%
        \else
           \mathchar"#1#2#3#4% 
        \fi     	    
	  \fi}
\newif\ifGreekBold  \GreekBoldfalse
\let\SAVEPBF=\pbf
\def\pbf{\GreekBoldtrue\SAVEPBF}%
  \newcounter{equationnumber}  
  \def\mathletters{%
     \addtocounter{equation}{1}
     \edef\@currentlabel{\theequation}%
     \setcounter{equationnumber}{\c@equation}
     \setcounter{equation}{0}%
     \edef\theequation{\@currentlabel\noexpand\alph{equation}}%
  }
    \def\BibTeX{{\rm B\kern-.05em{\sc i\kern-.025em b}\kern-.08em
                 T\kern-.1667em\lower.7ex\hbox{E}\kern-.125emX}}}{}%
\def\AmS{{\protect\usefont{OMS}{cmsy}{m}{n}%
                A\kern-.1667em\lower.5ex\hbox{M}\kern-.125emS}}}{}%
\let\DOTSI\relax
\def\RIfM@{\relax\ifmmode}%
\def\FN@{\futurelet\next}%
\def\iint{\DOTSI\intno@\tw@\FN@\ints@}%
\def\iiint{\DOTSI\intno@\thr@@\FN@\ints@}%
\def\iiiint{\DOTSI\intno@4 \FN@\ints@}%
\def\idotsint{\DOTSI\intno@\z@\FN@\ints@}%
\def\ints@{\findlimits@\ints@@}%
\newif\iflimtoken@
\newif\iflimits@
\def\findlimits@{\limtoken@true\ifx\next\limits\limits@true
 \else\ifx\next\nolimits\limits@false\else
 \limtoken@false\ifx\ilimits@\nolimits\limits@false\else
 \ifinner\limits@false\else\limits@true\fi\fi\fi\fi}%
\def\multint@{\int\ifnum\intno@=\z@\intdots@                          %1
 \else\intkern@\fi                                                    %2
 \ifnum\intno@>\tw@\int\intkern@\fi                                   %3
 \ifnum\intno@>\thr@@\int\intkern@\fi                                 %4
 \int}%                                                               %5
\def\multintlimits@{\intop\ifnum\intno@=\z@\intdots@\else\intkern@\fi
 \ifnum\intno@>\tw@\intop\intkern@\fi
 \ifnum\intno@>\thr@@\intop\intkern@\fi\intop}%
\def\intic@{%
    \mathchoice{\hskip.5em}{\hskip.4em}{\hskip.4em}{\hskip.4em}}%
\def\negintic@{\mathchoice
 {\hskip-.5em}{\hskip-.4em}{\hskip-.4em}{\hskip-.4em}}%
\def\ints@@{\iflimtoken@                                              %1
 \def\ints@@@{\iflimits@\negintic@
   \mathop{\intic@\multintlimits@}\limits                             %2
  \else\multint@\nolimits\fi                                          %3
  \eat@}%                                                             %4
 \else                                                                %5
 \def\ints@@@{\iflimits@\negintic@
  \mathop{\intic@\multintlimits@}\limits\else
  \multint@\nolimits\fi}\fi\ints@@@}%
\def\intkern@{\mathchoice{\!\!\!}{\!\!}{\!\!}{\!\!}}%
\def\plaincdots@{\mathinner{\cdotp\cdotp\cdotp}}%
\def\intdots@{\mathchoice{\plaincdots@}%
 {{\cdotp}\mkern1.5mu{\cdotp}\mkern1.5mu{\cdotp}}%
 {{\cdotp}\mkern1mu{\cdotp}\mkern1mu{\cdotp}}%
 {{\cdotp}\mkern1mu{\cdotp}\mkern1mu{\cdotp}}}%
\def\RIfM@{\relax\protect\ifmmode}
\def\text{\RIfM@\expandafter\text@\else\expandafter\mbox\fi}
\let\nfss@text\text
\def\text@#1{\mathchoice
   {\textdef@\displaystyle\f@size{#1}}%
   {\textdef@\textstyle\tf@size{\firstchoice@false #1}}%
   {\textdef@\textstyle\sf@size{\firstchoice@false #1}}%
   {\textdef@\textstyle \ssf@size{\firstchoice@false #1}}%
   \glb@settings}
\def\textdef@#1#2#3{\hbox{{%
                    \everymath{#1}%
                    \let\f@size#2\selectfont
                    #3}}}
\newif\iffirstchoice@
\def\Let@{\relax\iffalse{\fi\let\\=\cr\iffalse}\fi}%
\def\vspace@{\def\vspace##1{\crcr\noalign{\vskip##1\relax}}}%
\def\multilimits@{\bgroup\vspace@\Let@
 \baselineskip\fontdimen10 \scriptfont\tw@
 \advance\baselineskip\fontdimen12 \scriptfont\tw@
 \lineskip\thr@@\fontdimen8 \scriptfont\thr@@
 \lineskiplimit\lineskip
 \vbox\bgroup\ialign\bgroup\hfil$\m@th\scriptstyle{##}$\hfil\crcr}%
\def\Sb{_\multilimits@}%
\def\endSb{\crcr\egroup\egroup\egroup}%
\def\Sp{^\multilimits@}%
\newdimen\ex@
\def\rightarrowfill@#1{$#1\m@th\mathord-\mkern-6mu\cleaders
 \hbox{$#1\mkern-2mu\mathord-\mkern-2mu$}\hfill
 \mkern-6mu\mathord\rightarrow$}%
\def\leftarrowfill@#1{$#1\m@th\mathord\leftarrow\mkern-6mu\cleaders
 \hbox{$#1\mkern-2mu\mathord-\mkern-2mu$}\hfill\mkern-6mu\mathord-$}%
\def\leftrightarrowfill@#1{$#1\m@th\mathord\leftarrow
\mkern-6mu\cleaders
 \hbox{$#1\mkern-2mu\mathord-\mkern-2mu$}\hfill
 \mkern-6mu\mathord\rightarrow$}%
\def\overrightarrow{\mathpalette\overrightarrow@}%
\def\overrightarrow@#1#2{\vbox{\ialign{##\crcr\rightarrowfill@#1\crcr
 \noalign{\kern-\ex@\nointerlineskip}$\m@th\hfil#1#2\hfil$\crcr}}}%
\def\overleftarrow{\mathpalette\overleftarrow@}%
\def\overleftarrow@#1#2{\vbox{\ialign{##\crcr\leftarrowfill@#1\crcr
 \noalign{\kern-\ex@\nointerlineskip}$\m@th\hfil#1#2\hfil$\crcr}}}%
\def\overleftrightarrow{\mathpalette\overleftrightarrow@}%
\def\overleftrightarrow@#1#2{\vbox{\ialign{##\crcr
   \leftrightarrowfill@#1\crcr
 \noalign{\kern-\ex@\nointerlineskip}$\m@th\hfil#1#2\hfil$\crcr}}}%
\def\underrightarrow{\mathpalette\underrightarrow@}%
\def\underrightarrow@#1#2{\vtop{\ialign{##\crcr$\m@th\hfil#1#2\hfil
  $\crcr\noalign{\nointerlineskip}\rightarrowfill@#1\crcr}}}%
\def\underleftarrow{\mathpalette\underleftarrow@}%
\def\underleftarrow@#1#2{\vtop{\ialign{##\crcr$\m@th\hfil#1#2\hfil
  $\crcr\noalign{\nointerlineskip}\leftarrowfill@#1\crcr}}}%
\def\underleftrightarrow{\mathpalette\underleftrightarrow@}%
\def\underleftrightarrow@#1#2{\vtop{\ialign{##\crcr$\m@th
  \hfil#1#2\hfil$\crcr
 \noalign{\nointerlineskip}\leftrightarrowfill@#1\crcr}}}%
\def\qopnamewl@#1{\mathop{\operator@font#1}\nlimits@}
\let\nlimits@\displaylimits
\def\setboxz@h{\setbox\z@\hbox}
\def\varlim@#1#2{\mathop{\vtop{\ialign{##\crcr
 \hfil$#1\m@th\operator@font lim$\hfil\crcr
 \noalign{\nointerlineskip}#2#1\crcr
 \noalign{\nointerlineskip\kern-\ex@}\crcr}}}}
 \def\rightarrowfill@#1{\m@th\setboxz@h{$#1-$}\ht\z@\z@
  $#1\copy\z@\mkern-6mu\cleaders
  \hbox{$#1\mkern-2mu\box\z@\mkern-2mu$}\hfill
  \mkern-6mu\mathord\rightarrow$}
\def\leftarrowfill@#1{\m@th\setboxz@h{$#1-$}\ht\z@\z@
  $#1\mathord\leftarrow\mkern-6mu\cleaders
  \hbox{$#1\mkern-2mu\copy\z@\mkern-2mu$}\hfill
  \mkern-6mu\box\z@$}
\def\projlim{\qopnamewl@{proj\,lim}}
\def\injlim{\qopnamewl@{inj\,lim}}
\def\varinjlim{\mathpalette\varlim@\rightarrowfill@}
\def\varprojlim{\mathpalette\varlim@\leftarrowfill@}
\def\varliminf{\mathpalette\varliminf@{}}
\def\varliminf@#1{\mathop{\underline{\vrule\@depth.2\ex@\@width\z@
   \hbox{$#1\m@th\operator@font lim$}}}}
\def\varlimsup{\mathpalette\varlimsup@{}}
\def\varlimsup@#1{\mathop{\overline
  {\hbox{$#1\m@th\operator@font lim$}}}}
\def\align{\@verbatim \frenchspacing\@vobeyspaces \@alignverbatim
You are using the "align" environment in a style in which it is not defined.}
\let\csname endalign*\endcsname =\endtrivlist
\def\alignat{\@verbatim \frenchspacing\@vobeyspaces \@alignatverbatim
You are using the "alignat" environment in a style in which it is not defined.}
\let\csname endalignat*\endcsname =\endtrivlist
\def\xalignat{\@verbatim \frenchspacing\@vobeyspaces \@xalignatverbatim
You are using the "xalignat" environment in a style in which it is not defined.}
\let\csname endxalignat*\endcsname =\endtrivlist
\def\gather{\@verbatim \frenchspacing\@vobeyspaces \@gatherverbatim
You are using the "gather" environment in a style in which it is not defined.}
\let\csname endgather*\endcsname =\endtrivlist
\def\multiline{\@verbatim \frenchspacing\@vobeyspaces \@multilineverbatim
You are using the "multiline" environment in a style in which it is not defined.}
\let\csname endmultiline*\endcsname =\endtrivlist
\def\arrax{\@verbatim \frenchspacing\@vobeyspaces \@arraxverbatim
You are using a type of "array" construct that is only allowed in AmS-LaTeX.}
\def\tabulax{\@verbatim \frenchspacing\@vobeyspaces \@tabulaxverbatim
You are using a type of "tabular" construct that is only allowed in AmS-LaTeX.}
\let\csname endarrax*\endcsname =\endtrivlist
\let\csname endtabulax*\endcsname =\endtrivlist
\def\@@eqncr{\let\@tempa\relax
    \ifcase\@eqcnt \def\@tempa{& & &}\or \def\@tempa{& &}%
      \else \def\@tempa{&}\fi
     \@tempa
     \if@eqnsw
        \iftag@
           \@taggnum
        \else
           \@eqnnum\stepcounter{equation}%
        \fi
     \fi
     \global\tag@false
     \global\@eqnswtrue
     \global\@eqcnt\z@\cr}
 \def\endequation{%
     \ifmmode\ifinner % FLEQN hack
      \iftag@
        \addtocounter{equation}{-1} % undo the increment made in the begin part
        $\hfil
           \displaywidth\linewidth\@taggnum\egroup \endtrivlist
        \global\tag@false
        \global\@ignoretrue   
      \else
        $\hfil
           \displaywidth\linewidth\@eqnnum\egroup \endtrivlist
        \global\tag@false
        \global\@ignoretrue 
      \fi
     \else   
      \iftag@
        \addtocounter{equation}{-1} % undo the increment made in the begin part
        \eqno \hbox{\@taggnum}
        \global\tag@false%
        $$\global\@ignoretrue
      \else
        \eqno \hbox{\@eqnnum}% $$ BRACE MATCHING HACK
        $$\global\@ignoretrue
      \fi
     \fi\fi
 } 
 \newif\iftag@ \tag@false
 \def\tag{\@ifnextchar*{\@tagstar}{\@tag}}
 \def\@tag#1{%
     \global\tag@true
     \global\def\@taggnum{(#1)}}
 \def\@tagstar*#1{%
     \global\tag@true
     \global\def\@taggnum{#1}%  
}
\title{\LARGE \bf{Comparison of Centralized and Decentralized Approaches in Cooperative Coverage Problems with Energy-Constrained Agents}}
\author{Xiangyu Meng, Xinmiao Sun, Christos G. Cassandras, and Kaiyuan Xu
\thanks{This work was supported in part by NSF
under grants ECCS-1509084, DMS-1664644, and CNS- 1645681, by AFOSR under grant
FA9550-19-1-0158, by ARPA-E's NEXTCAR program under grant
DE-AR0000796, by the MathWorks, by the Fundamental Research Funds for the Central Universities under Grant FRF-TP-19-034A1, by Guangdong Basic and Applied Basic Research Foundation under Grant 2019A1515111039, and by China Postdoctoral Science Foundation funded project under Grant 2020M670136.}
\thanks{X. Meng is
with the Division of Electrical and Computer Engineering, Louisiana State
University, Baton Rouge, LA, 70803 USA {\tt\small xmeng5@lsu.edu}}
\thanks{X. Sun is
with the School of Automation and Electrical Engineering, Shunde Graduate School and Key Laboratory of Knowledge Automation for Industrial Processes, University of Science and Technology Beijing, 100083, China {\tt\small xmsun@ustb.edu.cn}}
\thanks{C. G.
Cassandras and K. Xu are with the Division of Systems Engineering and Center for
Information and Systems Engineering, Boston University, Brookline, MA 02446
USA {\tt\small cgc@bu.edu; xky@bu.edu}}
}
\begin{document}

\maketitle

\begin{abstract}
A multi-agent coverage problem is considered with energy-constrained agents. The objective of this paper is to compare the coverage performance between centralized and decentralized approaches.  To this end, a near-optimal centralized coverage control method is developed under energy depletion and repletion constraints. The optimal coverage formation corresponds to the locations of agents where the coverage performance is maximized. The optimal charging formation corresponds to the locations of agents with one agent fixed at the charging station and the remaining agents maximizing the coverage performance. We control the behavior of this cooperative multi-agent system by switching between the optimal coverage formation and the optimal charging formation. Finally, the optimal dwell times at coverage locations, charging time, and agent trajectories are determined so as to maximize coverage over a given time interval. In particular, our controller guarantees that at any time there is at most one agent leaving the team for energy repletion.    
%We develop a hybrid system model to describe the behavior of multiple agents cooperatively solving an optimal coverage problem under energy depletion and repletion constraints. The model captures the controlled switching of agents between coverage (when energy is depleted) and battery charging (when energy is replenished) modes.
%It guarantees the feasibility of the coverage problem
%by defining a guard function on each agent's battery level to prevent it from dying on its way to a charging station.
%The charging station plays the role of a centralized scheduler to solve the contention problem of agents competing for the only charging resource in the mission space.
%The optimal coverage problem is transformed into a parametric
%optimization problem to determine an optimal recharging policy. This problem is solved through the use of Infinitesimal Perturbation Analysis (IPA), with simulation results showing that a full recharging policy is optimal.
\end{abstract}

\section{INTRODUCTION}
Systems consisting of cooperating mobile agents are often used to perform
tasks such as coverage \cite{zhong2011distributed, leonard2013nonuniform, kantaros2015distributed, meng2018hybrid},
surveillance \cite{tang2005motion}, monitoring and sweeping
\cite{smith2012persistent}. A coverage task is one where agents are deployed
so as to cooperatively maximize the coverage of a given mission space
\cite{meguerdichian2001coverage}, where \textquotedblleft
coverage\textquotedblright\ is usually measured through
the joint detection probability of random events \cite{hossain2012impact}. Widely used methods to solve the coverage problem include distributed
gradient-based \cite{zhong2011distributed} and
Voronoi-partition-based algorithms \cite{cortes2004coverage}. These
approaches typically result in locally optimal solutions, hence possibly poor
performance. To escape such local optima, a boosting function approach is
proposed in \cite{sun2014escaping} where the performance is ensured to be
improved. Recently, the coverage problem was also approached by exploring the
submodularity property \cite{zhang2016string} of the objective function, and
a greedy-gradient algorithm is used to guarantee a provable bound relative to the
optimal performance \cite{sun2019exploiting}.

In most existing coverage problem settings, agents are assumed to have unlimited on-board
energy to perform the coverage task. However, in practice, battery-powered
agents can only work for a limited time in the field \cite{leahy2016persistent}. For example, most
commercial drones powered by a single battery can fly for only about 15
minutes. Developing distributed algorithms for multi-agent systems with energy constraints is considered in \cite{tokekar2011energy, jaleel2013probabilistic, aksaray2016dynamic, setter2017energy}. A consensus algorithm is proposed in~\cite{setter2017energy} to make multiple robots with energy constraints quickly reach a rendezvous point.  Unlike other multi-agent
energy-aware algorithms in the aforementioned references whose purpose is to reduce energy cost, we assume that a charging station is available for agents to replenish their energy according to some policy. We take into account such energy constraints
and add another dimension to the traditional coverage problem. The basic setup
is similar to that in \cite{zhong2011distributed}. Agents interact with the
mission space through their sensing capabilities which are normally dependent
upon their physical distance from an event location. Outside its sensing
range, an agent has no ability to detect events. 
The objective is to maximize an overall environment coverage measure by
controlling the movement of all agents in a {\it centralized} manner while
guaranteeing that no agent runs out of energy while in the mission space.

A {\it decentralized} feasible solution to this problem is proposed in~\cite{meng2018multi} via a hybrid system approach. Due to the decentralized nature of the algorithm in~\cite{meng2018multi}, agents have limited local information. Therefore, the performance is also degraded by the information inaccessibility.  This raises the question of what would be the ``best'' performance when all information is available, which motivates us to study the coverage problem via a centralized approach. Therefore, we revisit the same problem formulation as in~\cite{meng2018multi}. The objectives are to find the optimal centralized solution for multi-agent coverage problems and to characterize the ``price of decentralization''. To this end, we assume that the environment to be monitored is completely known. Then, the optimal coverage (OCV) locations of the agents while none of them needs recharging can be found through distributed gradient-based algorithms \cite{zhong2011distributed}, or improved versions such as the greedy-gradient based algorithm in~\cite{sun2019exploiting}, especially when obstacles are present. When an agent needs recharging, it will head to the charging station. If the agent still performs the coverage task at the charging station, the OCV locations for the remaining agents can be found using the aforementioned approaches. The optimal locations for all agents in this case are referred to as ``{\it optimal charging (OCH) formation}''. Therefore, every agent's behavior is to switch between the OCV formation and the OCH formation. The missing piece for the overall optimality is to determine the optimal way to manage the transient behavior between these two modes. However, this turns out to be a challenging task. To find a near-optimal solution for the transient between switches, a Traveling Salesman Problem (TSP) is solved to find the shortest total distances if an agent traverses all locations in both the OCV and OCH formations. The solution from the TSP dictates the order of locations being visited by any agent. Next, when the switching times of all agents are synchronized, the objective becomes minimizing the transient time and the energy cost during that time. By ``synchronization'', we mean that all agents leave the OCV formation at the same time, and arrive at the OCH formation at the same time. Therefore, the transient time is determined by the agent which travels the longest distance. The speeds of other agents can be determined by the transient time and the travel distance. %The simulation results show a significant performance improvement comparing the centralized approach with the decentralize approach in~\cite{meng2018multi}.   

 The main contributions of this paper are as follows: (i). Model the collective behavior of agents by formations (OCV formations and OCH formations) and transitions between them. (ii). Find the optimal orders of agents visiting different optimal locations including the charging station through solving a TSP. (iii) Derive optimal speed profiles for all agents during the transient time to minimize the transition cost. (iv). Quantify the price of decentralization through simulation experiments. 

\section{Problem Formulation}
Consider a bounded mission space $\mathcal{S}\in\mathbb{R}^{2}$. The value of a point $(x,y)\in \mathcal{S}$ in the mission space is characterized by a reward function $R(x,y)$, where $R(x,y)\geq 0$ and $\int\int_{\mathcal{S}}R(x,y)dxdy <\infty$.
The value of $R(x,y)$ is monotonically increasing in the importance associated with point $(x,y)$. %In this regard, $R(x,y)$ can be treated as the probability density function defining the probability of random events at the point $(x,y)$.
If all points in $\mathcal{S}$ are
treated indistinguishably, $R\left(  x,y\right)=\sigma$ for any $(x,y) \in \mathcal{S}$, where $\sigma>0$ is a constant.
A team of mobile agents labeled by $\mathcal{V}=\{1,2,\ldots,N\}$ is deployed in the mission space to detect possible events that occur in it.
Each agent has an isotropic sensing system with range $\delta_{i}$, that is,
an agent located at $(x_{i},y_{i})$ is able to cover the area
\[
\Omega_{i}\left(  x_{i},y_{i}\right)  =\left\{  \left(  x,y\right)  |\left(
x-x_{i}\right)  ^{2}+\left(  y-y_{i}\right)  ^{2}\leq\delta_{i}^{2}\right\}
\text{.}%
\]
The sensing probability of an agent for a point $\left(  x,y\right)$ within
its sensing range $\Omega_{i}\left(  x_{i},y_{i}\right)  $ is characterized by
the sensing function $p_{i}\left(  x,y,x_{i},y_{i}\right)  \in\left[
0,1\right]  $, and it depends on the distance between the agent location $\left(
x_{i},y_{i}\right)  $ and the point $\left(  x,y\right)  $. In particular, it
is monotonically decreasing in the distance between $\left(  x_{i}%
,y_{i}\right)  $ and $\left(  x,y\right)  $ and if a point $\left(
x,y\right)  $ is out of the sensing range of agent $i$, that is, $\left(
x,y\right)  \notin\Omega_{i}\left(  x_{i},y_{i}\right)  $, then $p_{i}\left(
x,y,x_{i},y_{i}\right)  =0$. For any given point $\left(  x,y\right)  $ in the
sensing range of multiple agents, assuming independence among agent sensing
capabilities, the joint event detection probability is given by \cite{zhong2011distributed}%
\begin{equation}
P\left(  x,y,\mathbf{s}\right)  =1-\prod\nolimits_{i=1}^{N}\left[
1-p_{i}\left(  x,y,x_{i},y_{i}\right)  \right]  \text{.} \label{sq}%
\end{equation}
Figure~\ref{figsrf} depicts the event detection probability of a single agent (Fig.~\ref{fig:gull}) and two agents with overlapping sensing range (Fig.~\ref{fig:tiger}), where $p(x,y,x_{i},y_{i})=1-\frac{(x-x_{i})^2+(y-y_{i})^2}{\delta_{i}^2}$ \cite{meng2018hybrid}.
\begin{figure}[hpbt]
\begin{center}
\begin{subfigure}[b]{0.45\textwidth}
\includegraphics[width=\textwidth]{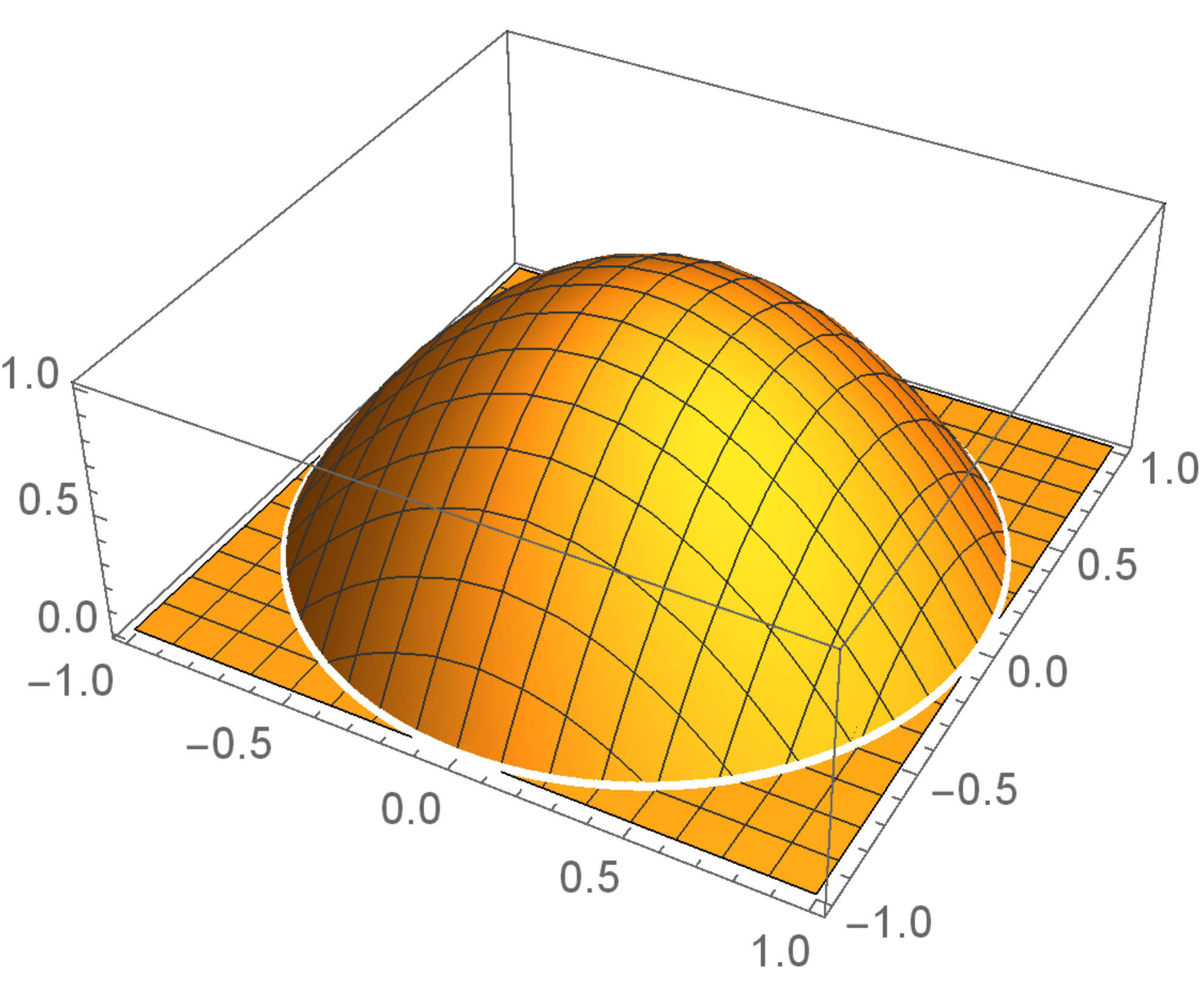}
\vspace{-0.9cm}
\caption{A single agent at (0,0)}
\label{fig:gull}
\end{subfigure}
%\vspace{-0.5cm} %\newline\qquad\qquad
\begin{subfigure}[b]{0.45\textwidth}
\vspace{1cm}
\includegraphics[width=\textwidth]{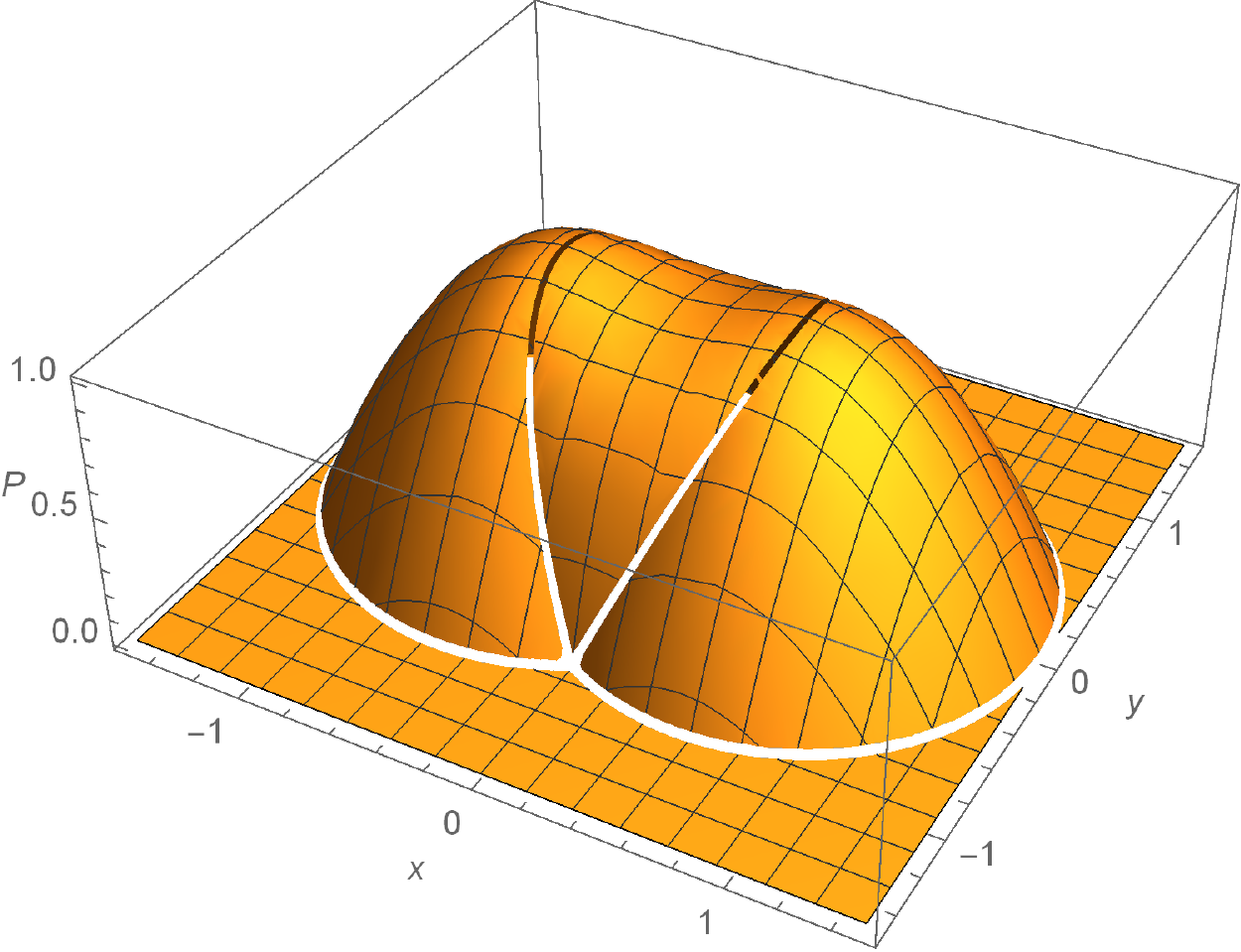}
%\vspace{0.5cm}
\caption{Two agents at (0.5,0) and (-0.5, 0)}
\label{fig:tiger}
\end{subfigure}
\end{center}
\caption{Sensing probability of an area with one and two agents performing coverage.}%
\label{figsrf}%
\end{figure}

Finally, the coverage performance of the mobile agent team to the area $\mathcal{S}$ is defined as%
\begin{equation}
H\left(  \mathbf{s}\right)  =\int\int_{\mathcal{S}}R\left(  x,y\right)  P\left(
x,y,\mathbf{s}\right)  dxdy, \label{op}
\end{equation}
where $\mathbf{s}=[s_{1}^{T},\ldots,s_{N}^{T}]^{T}$ with $s_{i}=[x_{i},y_{i}]^{T}$ is a column vector that
contains all agent positions.
Note that $H\left(
\mathbf{s}\right)  $ is a function mapping a vector $\mathbf{s\in%
%TCIMACRO{\U{211d} }%
%BeginExpansion
\mathbb{R}
%EndExpansion
}^{2N}$ into $%
%TCIMACRO{\U{211d} }%
%BeginExpansion
\mathbb{R}
%EndExpansion
$.

To find the optimal locations of all agents is a static optimization problem, which has been extensively studied \cite{cortes2004coverage, zhong2011distributed, sun2019exploiting}. Here we are interested in a {\it dynamic} coverage control problem with energy constraints, where each agent is associated with two state variables: location variable $s_{i}(t)$ and state-of-charge (SOC) variable $0\leq q_{i}(t)\leq 1$, which is a percentage of the battery level. The agents' sensing, motion, and communication activities are all powered by batteries, and there is a charging station available at $(0,0)$ for all agents to replenish their energy. We assume that there is only one outlet in the charging station. In other words, only one agent can be charged at any time.
The agent's motion is described by the following kinematic equations:%
\begin{equation}
\dot{x}_{i}(t)=v_{i}(t)\cos[\theta_{i}(t)], \quad \dot{y}_{i}(t)=v_{i}(t)\sin[\theta_{i}(t)] \label{md}
\end{equation}
where $v_{i}(t)$  and $\theta_{i}(t)$ denote the instantaneous speed and heading of agent $i$ at time $t$, respectively. We assume that $v_{i}\left(  t\right)
\in\left[  0,\bar{v}\right]$, where $\bar{v}$ is the maximum speed of an agent. For simplicity, assume that the speed and angular state can be controlled directly.

The state-of-charge (SOC) state satisfies the following dynamic equation:%
\begin{equation}
\dot{q}_{i}(t)=I_i(t)f(q_{i}(t),b_{i}(t))+(1-I_i(t))g(q_{i}(t),v_{i}(t),b_{i}(t)) \label{soc}
\end{equation}%
where $I_{i}(t)=1$ means the agent is in charging mode and $f(q_{i}(t),b_{i}(t))\geq 0$, and $I_{i}(t)=0$ means the agent is in energy depletion mode and $g(q_{i}(t),v_{i}(t),b_{i}(t))\leq 0$. Moreover, $g(q_{i}(t),v_{i}(t),b_{i}(t))=0$ when $v_i(t)=0$ and $b_{i}(t)=0$. The control $b_{i}(t) \in \{0,1\}$ is a binary variable to indicate ``on'' ($b_{i}(t)=1$) or ``off'' $b_{i}(t)=0$ of the sensing functionality of an agent. In other words, an agent is in energy conservation mode if there is neither motion nor sensing.

%has two different process: charging, and depleting. Let us define $I_{i}(t)$ as an indicator function when the battery is in charging state. When $I_{i}(t)=1$, the state of SOC follows:
%\begin{equation}
%\dot{q}_{i}(t)=g(q_{i}(t),v_{i}(t),b_{i}(t)) \label{disd}
%\end{equation}%
%where  We assume that $g(q_{i}(t),0,0)=0$, that is, when an agent is not moving and not sensing, it will not cost any energy. Note that.

Our objective is to maximize the coverage of the mission space $\mathcal{S}
\in\mathbb{R}^{2}$ over a time interval $[0,T]$, and at the same time to keep all
agents alive, that is, $q_{i}\left(  t\right)  >0$ for all $t\in\lbrack0,T]$.
The case $q_{i}\left(  t\right)  =0$ can occur only at the charging station
$\left(  0,0\right)  $. Therefore, we consider the following optimization
problem for each agent $i$:%
\begin{align}%
%\begin{tabular}
%[c]{cl}%
&\underset{v\left(  t\right),\text{ } \theta\left(  t\right),\text{ } b(t) }{\max}
\frac{1}{T}\int_{0}^{T}H\left(  \mathbf{s}\left(  t\right)  \right)  dt \label{covob}\\
\text{s.t. \quad} &(\ref{md}) \text{ and } (\ref{soc})\\ 
& I_{i}(t)=1 \text{ when } s_{i}(t)=0, \label{cons2}\\
&q_{i}\left(  t\right)  >0 \text{ when } s_{i}\left(  t\right) \neq0, \label{cons3}\\
&0\leq v_{i}(t)\leq \bar{v}, \, 0\leq \theta_{i}(t)< 2\pi, \,  0\leq q_{i}(t)\leq 1\\
& b_{i}(t)\in \{0,1\}, I_{i}(t) \in \{0,1\},  i=1,\ldots,N,\\
&0 \leq \sum\nolimits_{i=1}^{N} I_{i}(t) \leq 1, \label{cons4}
%& (\ref{b_charge}) if charging, (\ref{bd}) otherwise\\
%& if $s_{i}(t)=\mathbf{0}$,\\
%& then $s_{j}(t)\neq\mathbf{0}$ for all $j\neq i$\\
%& $i=1,\ldots,N,$%
%\end{tabular}%
%\label{of}
\end{align}%
where $T$ is a given time horizon, $v(t)=[v_{1}(t),\ldots,v_{N}(t)]^{T}$, $\theta(t)=[\theta_{1}(t),\ldots,\theta_{N}(t)]^{T}$, $b(t)=[b_{1}(t),\ldots,b_{N}(t)]^{T}$, and the coverage metric $H\left(  \mathbf{s}\left(  t\right)  \right)$ is defined in (\ref{op}). In this paper, we consider $b_i(t) =1$ for any $t\geq 0$, that is, an agent always senses the environment to perform the coverage task. The constraints (\ref{cons2}) indicate that an agent is in charging mode whenever it arrives at the charging station; (\ref{cons3}) prevents agents from dying, i.e., running out of energy, in the mission space;  (\ref{cons4}) ensures that only one agent can be
served at the charging station at any time.
\begin{remark}The same problem was solved in~\cite{meng2018multi} by a decentralized approach. Here we try to revisit the problem by using a centralized approach. In the centralized approach, all information is available, and every agent can be controlled in a centralized way. In the decentralized approach, all agents cooperatively find the OCV and OCH locations using only local information. When multiple agents compete for the charging station, the charging station works as a controller to schedule the charging of all competing agents. The purpose of this paper is to compare the performance of the two different approaches.
\end{remark}

\section{Main Results}
Previous work in~\cite{meng2018multi} solves this problem from an individual agent point of view, where the behavior of an agent is modeled through three different modes: coverage mode, to-charge mode and in-charge mode. In this paper, however, we aim to solve this problem from the team point of view. We would ultimately like to maximize the coverage level in (\ref{covob}) and minimize transient times that occur between the OCV and OCH formations. This comes down to solving the following problems:%(number it) solve for the optimal dwell and charge times. Bit before getting there we need to:
\begin{enumerate}
\item In Section~\ref{ol}, find the OCV and OCH locations for all agents.
\item In Section~\ref{sp}, solve a TSP to get an optimal path connecting all OCV and OCH locations found in Section~\ref{ol}.
\item In Section~\ref{fea}, establish problem feasibility.
\item In Section~\ref{opvel}, solve for the optimal speed problem over transient intervals assumed in Section~\ref{fea}. 
\item In Section~\ref{odtct}, maximize the coverage performance by optimizing dwell and charge times based on the feasibility condition found in Section~\ref{fea} and the optimal speed profile found in Section~\ref{opvel}.
\end{enumerate}

\subsection{Optimal Locations}\label{ol}
Let us assume that the environment is known, that is, $R(x,y)$ is known. If all agents are in the ``coverage mode'', they should be in the OCV locations as determined by standard gradient algorithms as in~\cite{zhong2011distributed}.% or the greedy-gradient algorithm proposed in~\cite{sun2019exploiting}. %For completeness of this paper, the greedy-gradient method is briefly introduced. First, the mission space is approximated by a finite number of location points. Then the greedy algorithm is used to $N$ best points. Then the $N$ best points are used for initial conditions of a distributed gradient algorithm to seek the best coverage performance. The details can be found in~\cite{sun2019exploiting}.

Let us denote the OCV locations for $N$ agents as ${\bf s}^{1}=\{s_{1}^{1},\ldots,s_{N}^{1}\}$ for the mission space $\mathcal{S}$. %In addition, we want to guarantee that no more than one agent competes for the charging station. In other words, there are $N-1$ agents which perform the coverage task with sufficient SOC at any given time. 
By assuming that the agent also performs the coverage task while resting at the charging station, we can calculate the OCH locations of the remaining $N-1$ agents by constraining one agent to be at $(0,0)$. Therefore, the OCH locations can be found using the gradient method proposed in~\cite{zhong2011distributed}. Let ${\bf s}^{2}=\{s_{1}^{2}, \ldots,s_{N-1}^{2},s_{N}^{2}\}$ be the OCH locations with $s_{N}^{2}=(0,0)$. Therefore, when all agents have enough energy, the optimal choice is to occupy all locations at ${\bf s}^{1}$. When an agent is at the charging station, the optimal choice for all agents is at the locations specified by ${\bf s}^{2}$. Whenever an agent leaves or re-joins the team, the agents switch between ${\bf s}^{1}$ and ${\bf s}^{2}$. Assume that all agents are of the same type and have the same initial SOC. The optimal scheduling is to let agents take turns to visit the charging station. Therefore, we are essentially transforming the original problem into a Multi-Agent TSP (MATSP) in the next section.

\subsection{Shortest Path}\label{sp}
Before proceeding further, let us give the following standard definitions which can be found in~\cite{diestel2010graph} to model the relationship between locations in ${\bf s}^{1}$, and ${\bf s}^{2}$.
\begin{definition}A graph $\mathcal{G}$ is called {\it bipartite} if its vertex set can be partitioned into two parts $\mathcal{V}_{1}$ and $\mathcal{V}_{2}$ such that every edge has one end in $\mathcal{V}_{1}$
and one in $\mathcal{V}_{2}$.
\end{definition}

\begin{definition}A bipartite graph in which every two vertices from different partition parts are adjacent is called {\it complete}.
\end{definition}

If $|\mathcal{V}_{1}|=|\mathcal{V}_{2}|=r$, we abbreviate the complete bipartite graph to $\mathcal{K}_{r}^{2}$, in which every part contains exactly $r$ vertices.

When agent 1 (without loss of generality) switches to ``to-charge'' mode, the locations $\{s_{2}^{1}, \ldots,s_{N}^{1}\}$ are not optimal for the remaining $N-1$ agents.
Therefore,  the remaining agents which are in the ``coverage mode'' need to switch to the OCH locations. When this process repeats, it turns out that an agent will visit all optimal locations in ${\bf s}^{1}$ and ${\bf s}^{2}$. Therefore, this process boils down to finding the shortest path for an agent to visit all optimal locations and return to its location. This is exactly the MATSP with certain constraints, i.e., when an agent is in
one of the OCV locations, it has to switch to one of the OCH locations. Thus, we can use the bipartite graph to model such constraints. As the agents switch
between the formations, we need to minimize the total traveled distance during transient times.
Let $\mathcal{K}_{N}^{2}=(\mathcal{V},\mathcal{E})$ denote the underlying topology, where the vertex set $\mathcal{V}$ can be partitioned into two sets: $\mathcal{V}_{1}={\bf s}^{1}$ and $\mathcal{V}_{2}={\bf s}^{2}$ such that $\mathcal{V}_{1}\cup \mathcal{V}_{2}=\mathcal{V}$, $\mathcal{V}_{1}\cap \mathcal{V}_{2} =\emptyset$ and $|\mathcal{V}_{1}|=|\mathcal{V}_{2}|=N$. Every edge in $\mathcal{V}$ has one end in $\mathcal{V}_{1}$ and the other end in $\mathcal{V}_{2}$ and vertices in the same set are not adjacent. In addition, $\mathcal{K}_{N}^{2}$ is complete, that is, every two vertices from different sets are adjacent. The weight of every edge is the distance between the two vertices.

Finding the shortest transient distance is equivalent to finding the shortest path in the graph $\mathcal{K}_{N}^{2}$. This is a MATSP, which can be solved by integer linear programming.

The underlying assumption is that when an agent switches to ``to-charge'' mode, no other agents will switch to the same mode until the agent returns and the OCV formation is attained. We will find a condition to guarantee that this assumption holds at all times.

\subsection{Feasibility}\label{fea}
Feasibility in this case means that the number of agents in both ``to-charge'' and ``in-charge'' modes are less than two. For simplicity, let us assume that the behavior of all agents is synchronized, that is, they start and finish the process of switching from $\mathcal{V}_{1}$ to $\mathcal{V}_{2}$ at the same time, and vice versa (i.e., from $\mathcal{V}_{2}$ to $\mathcal{V}_{1}$).
The intuition behind this assumption is that the coverage performance depends on the agents' relative distances.

Then, the problem reduces to finding four critical times: (1) the charging time $\tau_{c}$ at the charging station,
(2) the dwell time $\tau_{d}$ of agents on the OCV locations, (3) the transient time $\tau_{t}^{N-1}$ from the OCH locations to the OCV locations, and (4) the transient time $\tau_{t}^{N}$ from the OCH locations to the OCV locations. Note that the dwell time of agents on
the OCH locations is exactly equal to the charging time at the charging station.

%The minimum energy requirement for the problem to be feasible is that agents only switch among the nodes in $\mathcal{V}_{2}$, that is, the $N-1$ optimal locations
%and the charging stations. In this case, we have $\tau_{d}=0$, and $\tau_{c}$ will be the dwell time on one of the $N-1$ optimal locations.Without loss of generality,
% let us assume that the optimal path has the following orders: $1\rightarrow 2\rightarrow 3 \rightarrow \cdots N-1 \rightarrow N$ for the nodes in $\mathcal{V}_{2}$
% and node $N$ is the charging station.

Without loss of generality, we can assume that the optimal path to visit the locations for any agent follows the order: $1\rightarrow 2\rightarrow 3 \rightarrow \cdots 2N-1 \rightarrow 2N$, where the nodes with odd numbers
belong to the OCV locations, the nodes with even numbers belong to the OCH locations, and $2N$ is the charging station. Let us define $q_{i}^{-}$ and $q_{i}^{+}$ as the energy when agents arrive at node $i$ and leave node $i$, respectively, and $d_{2i-1}^{2i}$ as the distance between node
$2i-1$ and $2i$. Let us proceed backwards starting at node $2N$. Clearly, we must have $q_{2N}^{-}\geq 0$ to make the problem feasible. Therefore,
\[
q_{2N}^{-}=q_{2N-1}^{+}+h(q_{2N-1}^{+}, \tau_{t}^{N}, d_{2N-1}^{2N})\geq 0
\]
where $h(\cdot)$ is an energy cost function determined by (\ref{soc}) when $I_i(t)=0$ under the assumption of the optimal speed (which will be determined in Section~\ref{opvel}). If the process is repeated recursively, the minimum energy at node $2N-1$ will be%
\begin{align*}
q_{2N-1}^{+}&=q_{2N-1}^{-}+h(q_{2N-1}^{-}, \tau_{d}, 0)\\
q_{2N-1}^{-}&=q_{2N-2}^{+}+h(q_{2N-2}^{+}, \tau_{t}^{N-1}, d_{2N-2}^{2N-1})
\end{align*}
In general, the minimum energy requirements for the locations $2i$ and $2i-1$ are%
\begin{align}
q_{2i}^{+}&=q_{2i}^{-}+h(q_{2i}^{-}, \tau_{c}, 0) \notag\\
q_{2i-1}^{-}&=q_{2i-1}^{+}+h(q_{2i-1}^{+}, \tau_{t}^{N}, d_{2i-1}^{2i}) \label{feas_c1}
\end{align}%
and%
\begin{align}
q_{2i-1}^{+}&=q_{2i-1}^{-}+h(q_{2i-1}^{-}, \tau_{d}, 0) \notag\\
q_{2i-2}^{-}&=q_{2i-2}^{+}+h(q_{2i-2}^{+}, \tau_{t}^{N-1}, d_{2i-2}^{2i-1}), \label{feas_c2}
\end{align}%
respectively.
Eventually, the minimum energy for node 1 will be%
\[
q_{1}^{+}=q_{1}^{-}+h(q_{1}^{-}, \tau_{d}, 0).
\]
Also note that%
\begin{align}
q_{1}^{-}&=q_{2N}^{+}+h(q_{2N}^{+},\tau_{t}^{N-1}, d_{2N}^{1}) \label{feas_c10}\\
q_{2N}^{+}&=q_{2N}^{-}+\kappa(q_{2N}^{-},\tau_{c}) \label{feas_c11}
\end{align}%
where $\kappa(\cdot)$ is the solution of the differential equation (\ref{soc}) with the initial condition $q_{2N}^{-}$ and $I_{i}(t)=1$. 

We now include an iteration index $k=1,2,\dots$, and write%
\[
q_{2N}^{-}(k)\geq q_{2N}^{-}(k-1)\geq 0
\]
We then need to solve the following optimization problem%
\begin{align}
\text{Feasibility Problem} &\min_{\tau_c, \tau_d=0, \tau_{t}^{N},\tau_{t}^{N-1}} q_{2N}^{-}(k-1) \label{feainq3}\\
\text{subject to}
\qquad & (\ref{feas_c1}) \text{ and } (\ref{feas_c2}) \text{ for } i=1,\dots,N \\
\qquad &q_{2N}^{-}(k)\geq q_{2N}^{-}(k-1)\geq0 \label{feainq}
\end{align}%
for any $k\geq 1$, where we set $\tau_d=0$ to capture the extreme case that the dwell time at the OCV locations is zero for all agents. Note that $q_{2N}^{-}(k)$ can be expressed as a function of $q_{2N}^{-}(k-1)$, $\tau_c$, $\tau_d$, $\tau_{t}^{N}$ and $\tau_{t}^{N-1}$.

Only if a solution to (\ref{feainq3})-(\ref{feainq}) exists we can further maximize the dwell time $\tau_d$. Therefore, it is clear that (\ref{feainq3})-(\ref{feainq}) defines the feasibility problem. We want to find the control variables $\tau_c$, $\tau_{t}^{N}$ and $\tau_{t}^{N-1}$ so that the SOC does not decrease during a cycle.
This condition determines the feasibility of the problem. Once the minimum $q_{2N}^{-}(k-1)$ is obtained, we can calculate $q_{2N}^{+}(k-1)$ using (\ref{feas_c11}), and then $q_1^{-1}(k)$ using (\ref{feas_c10}) to start a new iteration. Repeating the calculation forward, we are able to compute $q_{i}^{-}(k)$, and $q_{i}^{+}(k)$ using (\ref{feas_c2}) and (\ref{feas_c1}) for $i=1,\dots,2N$.

\subsection{Optimal Speed}\label{opvel}
In the previous section, we assume that the energy cost is calculated under the optimal speed of an agent during a transient period in which a switch between OCV and OCH formations takes place. Here we will derive this optimal speed when the travel time and distance of a transient segment of an agent trajectory are given.
During the transient period $\tau$, the optimal speed can be determined so as to minimize the energy cost.
Therefore, the following optimization problem is formulated:
\begin{align}%
%\begin{tabular}
%[c]{cl}%
&\underset{v_{i}\left(  t\right),\text{ } \theta_{i}\left(  t\right)}{\min}
\int_{t_{0}}^{t_{0}+\tau} \dot{q}_{i}(t)dt\\
\text{subject to \quad} &  (\ref{md})  \text{ and } (\ref{soc}) \\
&0\leq v_{i}(t)\leq \bar{v}\\
& s_{i}\left(  t_{0}\right)  = \underline{s}\\
& s_{i}\left(  t_{0}+\tau\right)  = \bar{s}\\
&q_{i}\left(  t_{0}\right)  = \underline{q},
\end{align}%
where $\underline{s}$ and $\bar{s}$ are initial and final positions of agent $i$,  respectively, and $ \underline{q}$ is the initial SOC of agent $i$.
\begin{theorem} Assume that  the energy model in (\ref{soc}) when $I_{i}(t)=0$ has the following linear form%
\[
g(q_{i}(t),v_{i}(t),1)=-\alpha v_i(t)-\beta
\]%
where $\alpha>0$ and $\beta>0$ are two constants. Then, the optimal solutions to the above optimization problem are %
\[
v^{\ast}(t) = \frac{\|\bar{s}-\underline{s}\|}{\tau}
\]%
and%
\[
\theta^{\ast}(t)= \phase{\bar{s}-\underline{s}}
\]%
for $t \in [t_{0},t_{0}+\tau)$, where $\phase{\bar{s}-\underline{s}}$ is the heading from $\underline{s}$ to $\bar{s}$.
The minimum energy cost is%
\[
\alpha \|\bar{s}-\underline{s}\| + \beta \tau.
\]
\end{theorem}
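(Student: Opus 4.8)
The plan is to reduce the problem to minimizing the arc length of the agent's trajectory and then apply the elementary vector inequality $\|\int \dot s_i\,dt\|\le\int\|\dot s_i\|\,dt$. First I would substitute the linear model into (\ref{soc}) with $I_i(t)=0$ and $b_i(t)=1$, so that $\dot q_i(t)=-\alpha v_i(t)-\beta$. Since $\dot q_i\le 0$, the energy expended over the transient interval is
\[
-\int_{t_0}^{t_0+\tau}\dot q_i(t)\,dt=\alpha\int_{t_0}^{t_0+\tau}v_i(t)\,dt+\beta\tau,
\]
and as $\tau$ is fixed the constant term $\beta\tau$ is irrelevant to the minimization; minimizing the energy cost is therefore equivalent to minimizing $\int_{t_0}^{t_0+\tau}v_i(t)\,dt$.

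The second step is to identify this integral as a path length. From the kinematics (\ref{md}) one has $\dot x_i^2+\dot y_i^2=v_i^2$, so $v_i(t)=\|\dot s_i(t)\|$ and the integral is exactly the length of the curve traced from $\underline s$ to $\bar s$. The triangle inequality for vector integrals then gives
\[
\int_{t_0}^{t_0+\tau}\|\dot s_i(t)\|\,dt\ge\Big\|\int_{t_0}^{t_0+\tau}\dot s_i(t)\,dt\Big\|=\|s_i(t_0+\tau)-s_i(t_0)\|=\|\bar s-\underline s\|,
\]
where the last equalities use the endpoint constraints. Hence the energy cost is bounded below by $\alpha\|\bar s-\underline s\|+\beta\tau$.

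The third step is to verify that the proposed controls attain this bound. Fixing the heading $\theta^*(t)=\phase{\bar s-\underline s}$ aligns every velocity vector with $\bar s-\underline s$, which is precisely the equality condition for the triangle inequality; the constant speed $v^*(t)=\|\bar s-\underline s\|/\tau$ then makes $s_i(t)$ traverse the straight segment and reach $\bar s$ at $t_0+\tau$, so both endpoint constraints hold and the traced length equals $\|\bar s-\underline s\|$. Substituting back gives cost $\alpha\|\bar s-\underline s\|+\beta\tau$, matching the lower bound, so these controls are optimal.

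I expect the main obstacle to be conceptual rather than computational: the arc-length argument forces the heading to be constant but does not by itself single out a constant speed, since any monotone traversal of the straight segment has the same length and hence the same cost. One should therefore present the constant-speed profile as a representative minimizer (and check admissibility, i.e.\ $v^*=\|\bar s-\underline s\|/\tau\le\bar v$). A more systematic derivation via Pontryagin's Minimum Principle confirms this picture: the costates are constant, the optimal heading opposes the position costate, and the speed enters the Hamiltonian linearly, so on the optimal (singular) arc the speed is undetermined pointwise and only its time-average $\|\bar s-\underline s\|/\tau$ is fixed by the travel time $\tau$.
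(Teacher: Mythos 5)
Your proof is correct, but it takes a genuinely different route from the paper. The paper's proof is a Pontryagin Minimum Principle computation: it forms the Hamiltonian, shows the costates $\lambda_{x}$, $\lambda_{y}$, $\lambda_{q}$ are constant, uses the stationarity condition in $\theta_{i}$ to conclude the heading is constant, and then argues from $\dot{\mathcal{H}}=0$ that $\dot{v}_{i}=0$. You instead reduce the cost to $\alpha\int_{t_0}^{t_0+\tau}v_{i}(t)\,dt+\beta\tau$, identify $\int v_{i}\,dt$ as arc length via $v_{i}=\|\dot{s}_{i}\|$, and apply the vector triangle inequality $\int\|\dot{s}_{i}\|\,dt\geq\bigl\|\int\dot{s}_{i}\,dt\bigr\|=\|\bar{s}-\underline{s}\|$, then verify attainment by the straight-line constant-speed control. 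This buys two things the paper's argument does not: a global lower bound, hence a certificate of optimality rather than only necessary conditions; and an honest treatment of the degeneracy you flag at the end. Indeed, since the Hamiltonian is linear in $v_{i}$, the paper's step ``$\dot{\mathcal{H}}=0$, thus $\dot{v}_{i}=0$'' is only forced when the coefficient $-(1+\lambda_{q})\alpha+\lambda_{\theta}$ is nonzero; in the singular case the speed is pointwise undetermined, and your argument makes transparent why this is harmless --- the cost depends only on $\int v_{i}\,dt$, so any nonnegative speed profile traversing the straight segment in time $\tau$ is equally optimal, with constant speed a representative minimizer. Your admissibility caveat $\|\bar{s}-\underline{s}\|/\tau\leq\bar{v}$ is also well taken and left implicit by the paper; it holds in the paper's downstream use because the transient times are set as $\tau_{t}=\bar{d}/\bar{v}$ with $\bar{d}$ the longest leg, so the worst-case agent travels at exactly $\bar{v}$ and all others strictly below. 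What the paper's PMP route buys in exchange is a systematic template that would extend to energy models $g$ depending on $q_{i}$ or nonlinearly on $v_{i}$, where your triangle-inequality reduction no longer applies.
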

\begin{proof}The Hamiltonian function is defined as%
\begin{align*}
\mathcal{H}(s_{i},v_{i},\theta_{i},q_{i},t)=\,&\dot{q}_{i}+\lambda_{x}v_{i}\cos(\theta_{i})+\lambda_{y}v_{i}\sin(\theta_{i})\\
&+\lambda_{q}g(q_{i},v_{i},1).
\end{align*}
We have the co-state equations:
\[
-\dot{\lambda}_{x}=\frac{\partial \mathcal{H}}{\partial x_{i}}=0, \quad -\dot{\lambda}_{y}=\frac{\partial \mathcal{H}}{\partial y_{i}}=0.
\]
Therefore, we know that $\lambda_{x}$ and $\lambda_{y}$ are two constants. From the stationarity condition, we have%
\[
\frac{\partial \mathcal{H}}{\partial \theta_{i}}=-\lambda_{x}v_{i}\sin(\theta_{i})+\lambda_{y}v_{i}\cos(\theta_{i})=0.
\]
Then, we know that $\theta_{i}$ is also a constant determined by the initial and final positions. Let $\lambda_{x}=\lambda_{\theta} \cos(\theta_{i})$ and $\lambda_{y}=\lambda_{\theta} \sin(\theta_{i})$ with a constant $\lambda_{\theta}$. Thus, the Hamiltonian function becomes
\begin{align*}
\mathcal{\mathcal{H}}(s_{i},v_{i},\theta_{i},q_{i},t)=\,&\dot{q}_{i}+\lambda_{\theta} v_{i}\cos^2(\theta_{i})+\lambda_{\theta} v_{i}\sin^2(\theta_{i})\\
&+\lambda_{q}g(q_{i},v_{i},1)\\
   =\,&(1+\lambda_{q})g(q_{i},v_{i},1)+\lambda_{\theta} v_{i}.
\end{align*}
Then, we have%
\begin{align*}
-\dot{\lambda}_{q}=\frac{\partial \mathcal{H}}{\partial q_{i}}=(1+\lambda_{q})\frac{\partial g(q_{i},v_{i})}{\partial q_{i}}.
\end{align*}
Based on the linear form of $g(q_{i},v_{i},1)$, we have%
\[\frac{\partial g(q_{i},v_{i},1)}{\partial q_{i}}=0.\]%
 We know that $\lambda_{q}$ is a constant and% 
\[
\mathcal{H}(s_{i},v_{i},\theta_{i},q_{i},t)=[-(1+\lambda_{q})\alpha +\lambda_{\theta} ]v_{i}(t)-(1+\lambda_{q})\beta .
\]%
Since $\mathcal{H}$ is not an explicit function of time $t$, we have $\dot{\mathcal{H}}=0$. Thus, we obtain $\dot{v}_i(t)=0$. Therefore, $v_{i}$ is a constant determined by the distance between the initial and final positions and the travel time $\tau$.
\end{proof}

The energy cost is determined by both the distance and the travel time $\tau$. Therefore, to reduce the transient time, it is always optimal for agents who travel the longest distance during the transient times to use the maximum speed when the energy consumption model is a linear function of the speed.

\subsection{Optimal Dwell Time and Charging Time}\label{odtct}
Once the solution of the TSP is available, the remaining task is to maximize the coverage time and minimize the transient time during a cycle. Then, we define a duty cycle-like objective function below as the fraction of the total cycle $\tau_c+\tau_d+\tau_{t}^N+\tau_{t}^{N-1}$ used by the dwell time $\tau_d$ (which provides maximum coverage):
\begin{align}
&\max_{\tau_c, \tau_d} \frac{\tau_d}{\tau_c+\tau_d+\tau_{t}^N+\tau_{t}^{N-1}} \label{opcd}\\
\text{subject to}
\qquad & \tau_{c}\leq \bar{\tau}_c\label{opcd1}\\
\qquad & \{\tau_c, \tau_d \}\in \mathcal{F} \label{opcd2}
\end{align}%
where $\mathcal{F}$ is the set of all pairs of $(\tau_c, \tau_d )$ which can satisfy the inequality (\ref{feainq}) in Section~\ref{fea}, and $\bar{\tau}_c$ is the time when the battery is fully charged starting with an initial SOC $q_{2N}^{-}$. In addition:%
\begin{equation}
\tau_{t}^N = \frac{\bar{d}^N}{\bar{v}}, \qquad \tau_{t}^{N-1} = \frac{\bar{d}^{N-1}}{\bar{v}},
\end{equation}
where $\bar{d}^N = \max_{i=1,\dots,N}{d_{2i-1}^{2i}}$, and $\bar{d}^{N-1} = \max_{i=1,\dots,N}{d_{2i}^{2i-1}}$.

The first constraint requires an agent to leave the charging station once its battery is fully charged. This is motivated by the fact shown in our previous work in~\cite{meng2018hybrid} that it is optimal to fully charge an agent. The second constraint ensures that the charging time and the dwell time must satisfy the feasibility constraint. %Once an agents energy falls below the minimum energy threshold at a particular vertex, a formation switch event is triggered.
%The task is to monitor an area by mobile sensors. However, the sensors are
%battery powered. They need to go to a charging station occasionally to avoid
%death in the mission.
%\section{Optimal Trajectory}
%The optimal solution is to minimize the transient time. Therefore, the transient time is determined by the agents travel the longest distance. The optimal trajectory of the agent which will travel the longest distance is a straight line connecting two locations. Then we can discretize the trajectory of the straight line. All other agents' trajectory can be discretized as well, and determined using the approach in~\cite{sun2019exploiting}. Since the optimal locations of any agent varies continuously, it will end at its destination since all agents' behavior are synchronized. Also note that the trajectory must be generated under the energy constraint. Therefore, we have the following a sequence of static optimization problem to be solved:
%\begin{align*}
%&\max_{s_{i}(t), i \in N/\{j\}} \int_{t_0}^{t_1} H(s(t))dt\\
%\text{subject to}
%\qquad &s_{i}(t_{0})=s_{i}^0, s_{i}(t_{1})=s_{i}^1,\\
%\qquad & q_{i}(t_{1})\geq q_1^{*}, i \in N/\{j\}\\
%\end{align*}%
%where $t_1-t_0 = d/\bar{v}$, and%
%\[
%s_j(t)=s_{j}(t_{0})+\frac{t-t_0}{t_1-t_0}\left[s_{j}(t_{1})-s_{j}(t_{0})\right]
%\] 

\section{SIMULATION EXAMPLES}
Let us consider a small network with 3 agents to cover a $600 \times 500$ rectangular mission space. By using the gradient approach \cite{zhong2011distributed}, the OCV locations of all three agents with a sensing range $220$ are found to be $s_{1}^{1}=(186.7, 119.3)$, $s_{2}^{1}=(160.3, 371.1)$,
 and $s_{3}^{1}=(451.4, 290.4)$ shown in blue in Fig.~\ref{fig:my-label}, and the OCV locations are $s_{1}^{2}=(0,0)$, $s_{2}^{2}=(169.3, 320.2)$ and $s_{3}^{2}=(430.6, 185.0)$ shown in red in Fig.~\ref{fig:my-label}. The charging station is located at $s_{1}^{2}$.
 Let us assume that the charging dynamics in (\ref{soc}) have the form $f(q_{i}(t),1)=c-\beta$, and the energy depletion dynamics in (\ref{soc}) have the form $g(q_{i}(t),v_{i}(t),1)=-\alpha v_{i}(t)-\beta$, where $\alpha$, $\beta$ and $c$
are three constants. For a properly defined problem, the following constraint should be satisfied
\begin{equation}
 c\geq 3(a \bar{v}+\beta) \label{fea3},
\end{equation}%
where $\bar{v}$ is the maximum allowed speed of all agents. By treating the charging station as a server, the charging rate is $c$ if it is occupied at all times, and the worst case
energy depletion rate over three agents is $3(a \bar{v}+\beta)$. Thus, the condition (\ref{fea3}) ensures the feasibility to prevent any agent from running out of energy in the mission space.
 By solving the TSP, the shortest path is 
 $s_{2}^{1}\rightarrow s_{2}^{2}\rightarrow s_{3}^{1}\rightarrow s_{3}^{2}\rightarrow s_{1}^{1}\rightarrow s_{1}^{2}\rightarrow s_{2}^{1}$. The total traveling distance is $2388$.
 \begin{figure}[!htb]
\center{\includegraphics[trim=0 0cm 0 0cm, width=0.48\textwidth]{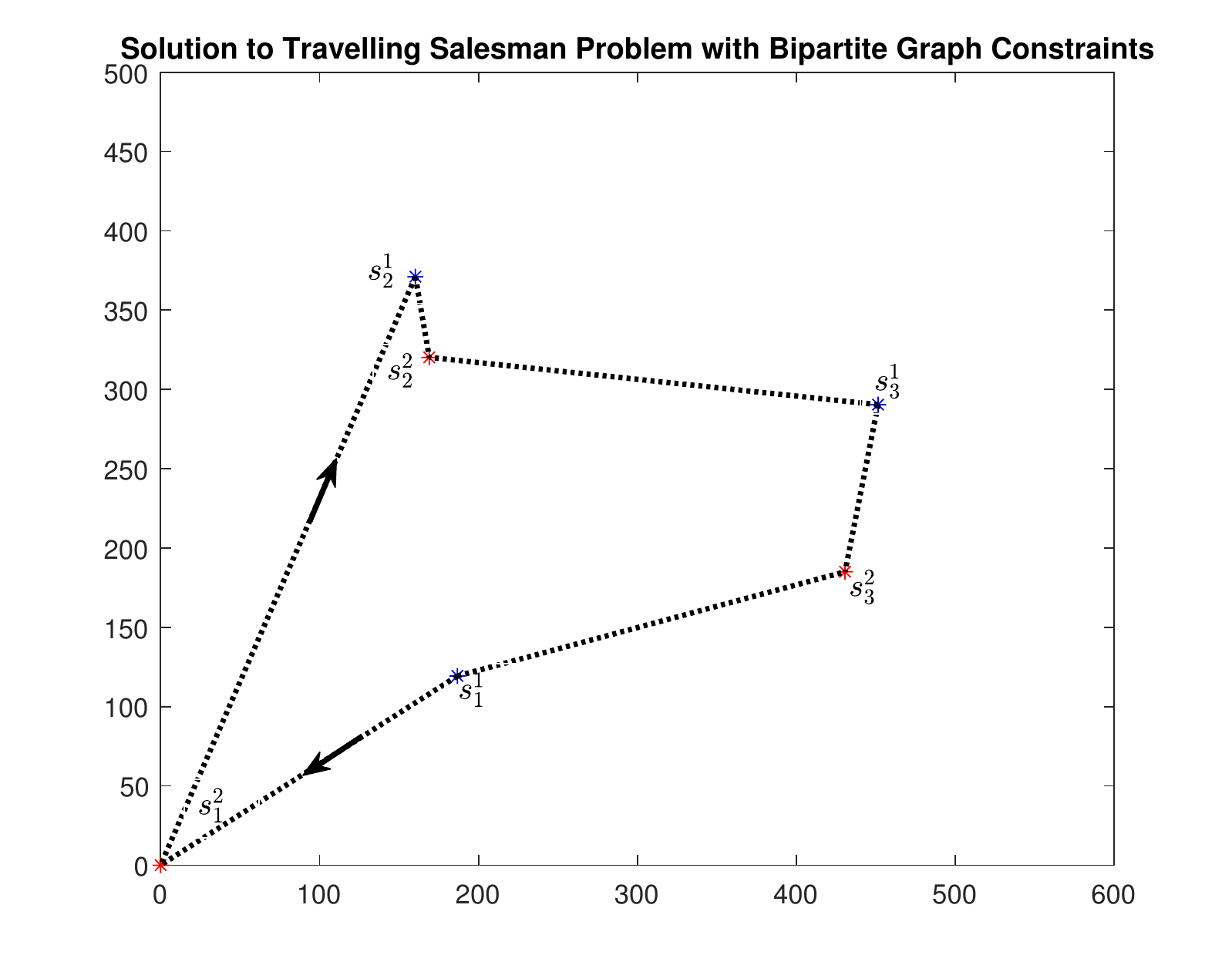}}
\caption{The shortest path for the TSP}
\label{fig:my-label}
 \end{figure}
 
Let us solve the feasibility problem (\ref{feainq3})-(\ref{feainq}) first. Note that node $6$ is defined as the charging station in Section~\ref{fea}. Assume that $q_{6}^{-}=q_{0}$, that is, the SOC when an agent arrives at the charging station. The distances are $d_{s_{1}^{1}}^{s_{1}^{2}}=221.5612, d_{s_{1}^{2}}^{s_{2}^{1}}=252.5939, 
 d_{s_{2}^{1}}^{s_{2}^{2}}=107.4328, d_{s_{2}^{2}}^{s_{3}^{1}}=283.9381, 
d_{s_{3}^{1}}^{s_{3}^{2}}=51.6432, d_{s_{3}^{2}}^{s_{1}^{1}}=404.2416$. When the energy depletion model is linear in $v_{i}$, it is optimal to choose the shortest transient time. 
The lower bound of transient times $\tau_{t}^{2}$ and $\tau_{t}^{3}$ are determined by the distances and maximum speed. Therefore, we can choose%
\[
\tau_{t}^{3}=\frac{\max\{d_{s_{1}^{2}}^{s_{2}^{1}}, d_{s_{2}^{2}}^{s_{3}^{1}}, d_{s_{3}^{2}}^{s_{1}^{1}}\}}{\bar{v}}=\frac{404.2416}{\bar{v}}
\]
and
\[
\tau_{t}^{2}=\frac{\max\{d_{s_{1}^{1}}^{s_{1}^{2}}, d_{s_{2}^{1}}^{s_{2}^{2}}, d_{s_{3}^{1}}^{s_{3}^{2}}\}}{\bar{v}}=\frac{221.5612}{\bar{v}}
\]

After charging for $\tau_c$, 
 the SOC increases to $q_{0}+\tau_{c}(c-\beta)$. Then, the agent heads to $s_{2}^{1}$, and its SOC decreases to $q_{0}+\tau_{c}(c-\beta)-\alpha d_{4}^{2}-\beta\tau_{t}^{3}$, where the third term and the last term correspond to the energy cost of motion and sensing, respectively. 
To solve the feasibility problem (\ref{feainq3}), we set the dwell time at the OCV locations as zero. After one cycle, when an agent returns to the charging station, its SOC
becomes%
\[
q_{0}+\tau_{c}c-2388\alpha  - 3\beta\tau_{t}^{3}-3\beta\tau_{t}^{2}-3\beta \tau_c.
\]%
and we require:
\[
q_{0}+\tau_{c}c-2388 \alpha - 3\beta\tau_{t}^{3}-3\beta\tau_{t}^{2}-3\beta \tau_c\geq q_{0}.
\]

Therefore, in this case it is possible $q_{0}=0$, and the minimum charging time is
\[
\tau_c=\frac{2388 \alpha + 3\beta(\tau_{t}^{3}+\tau_{t}^{2})}{c-3\beta}
\]
Based on $q_0=0$, $\tau_c$, $\tau_{t}^{2}$, and $\tau_{t}^{3}$, we are able to calculate the minimum SOC for all 3 optimal locations as shown at the end of Section~\ref{fea}. 

If an agent stays at the charging station more than the minimum $\tau_c$, then the dwell time $\tau_d$ will not be zero. Therefore, we need to solve the optimization problem (\ref{opcd})-(\ref{opcd2}) to maximize $\tau_{d}$ and
its percentage during a cycle:
\begin{align*}
&\max_{\tau_c, \tau_d} \frac{\tau_d}{\tau_c+\tau_d+\tau_{t}^3+\tau_{t}^2}\\
\text{subject to}\\
&\tau_{c}\leq \frac{1}{c-\beta}\\
&\tau_c \geq \frac{2388 \alpha + 3\beta(\tau_{t}^{3}+\tau_{t}^{2}+\tau_d)}{c-3\beta}
\end{align*}
The first condition is to make sure that agents will not stay at the charging station when it is fully charged which corresponds to (\ref{opcd1}). The second condition is to guarantee that an agent will not run out of energy in the mission space, which corresponds to (\ref{opcd2}).

To solve the above optimization problem, the optimal solution occurs when the first inequality become equality. Then, we can write the relationship between $\tau_c$ and $\tau_d$ as $\tau_c=a+b\tau_d$. If we substitute $\tau_c$ by $a+b\tau_d$, we know that the larger $\tau_d$ leads to better performance. Therefore, the optimal solution for the above problem is to let the agent be fully charged, that is,% 
\[\tau_c=\frac{1}{c-\beta},
\] and%
\[
\tau_d = \frac{1-2388 \alpha}{3\beta}-\frac{1}{c-\beta}-\tau_{t}^{3}-\tau_{t}^{2}
\]
Let us choose $\alpha = 0.0005$, $\beta=0.0005$, $c=0.01$, and $\bar{v}=50$.
The coverage performance of the above centralized algorithm is depicted in Fig.~\ref{f1_c}. The cycles are clearly visualized in the figure, where the top horizontal lines and the bottom horizontal lines correspond to the time when agents are in the OCV formation, and in the OCH formation, respectively.

\begin{figure}[htbp]
\includegraphics[width=\linewidth]{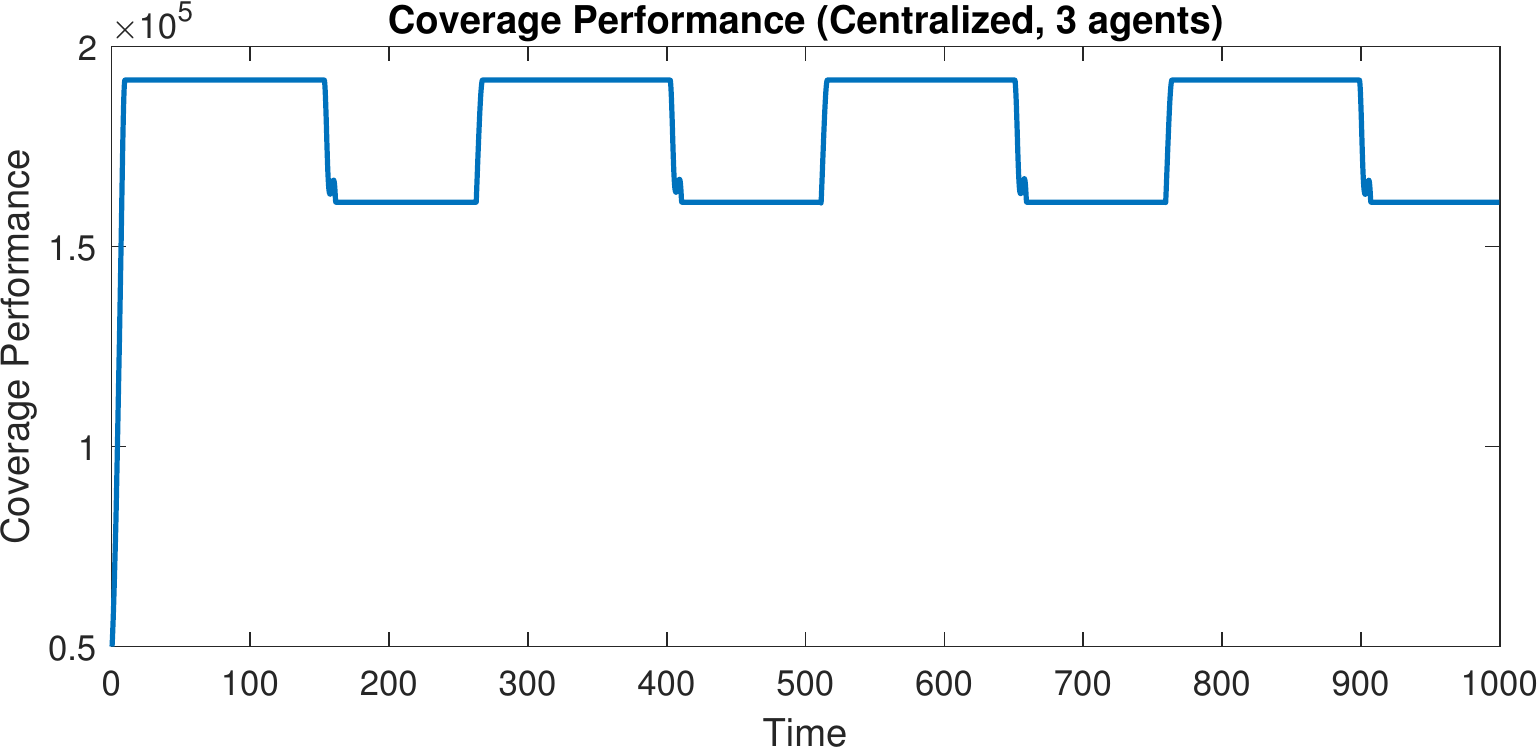}
\caption{Performance with Centralized Approach}
\label{f1_c}
\end{figure}

The coverage performance of the decentralized approach is computed using the approach proposed in~\cite{meng2018multi}. In the decentralized approach, agents may compete for the charging station. When this case occurs, the agent with lower priority has to turn off its sensing capability, therefore, performance may be significantly compromised. The coverage performance over time for the decentralized approach is shown in Fig.~\ref{f2}.

\begin{figure}[htbp]
\includegraphics[width=\linewidth]{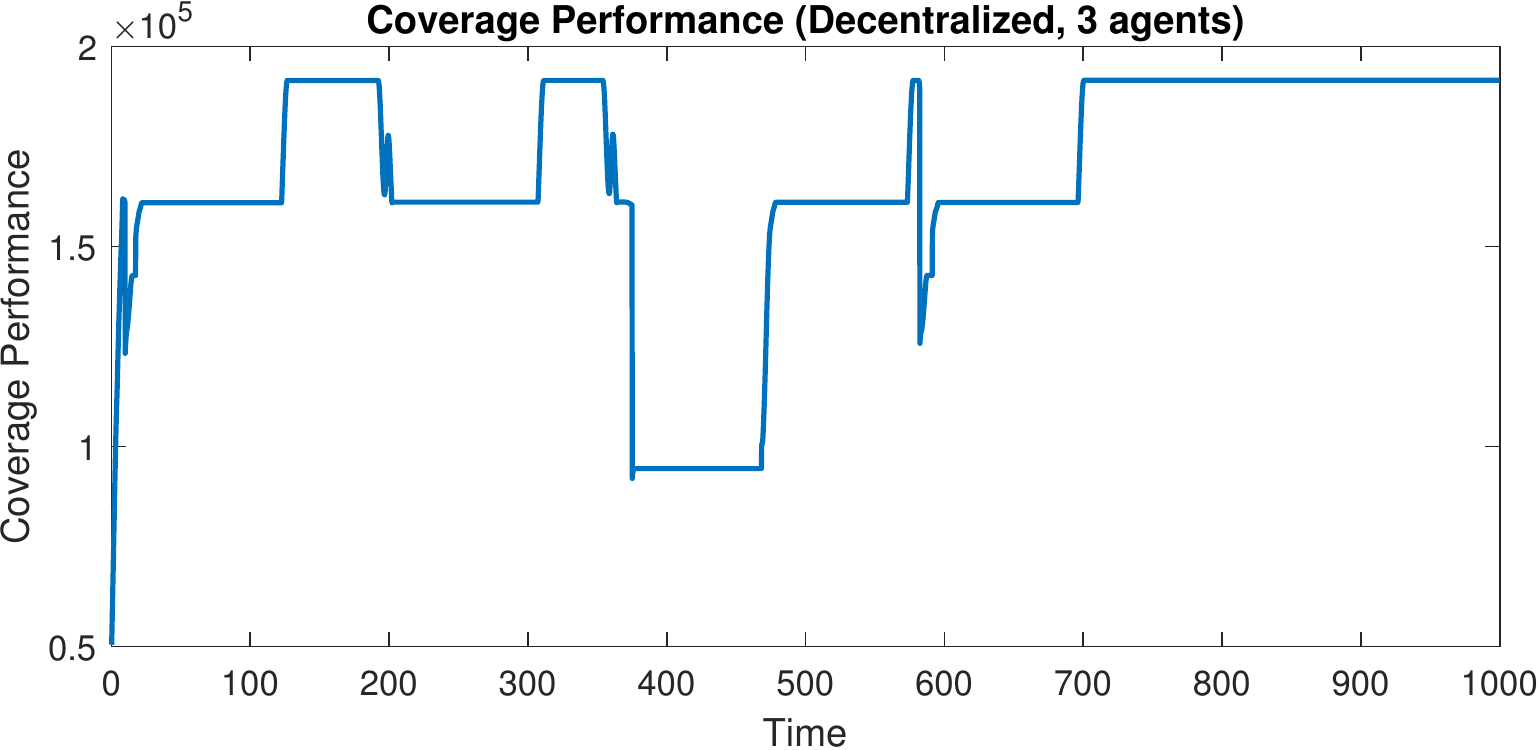}
\caption{Performance with Decentralized Approach}
\label{f2}
\end{figure}
The average coverage performance over a time period of 1000 seconds of the centralized and decentralized approaches is $177815$ and $166917$, respectively. The performance improvement is about $6.53\%$. Also note from both figures, the performance lower bound of the centralized approach is determined by the OCH formation. The bottom horizontal line in Fig.~\ref{f2} indicates that low priority agents turn off their sensing when competing for the charging station. The results show that both the average and the worst performance is significantly improved by the centralized approach.

\begin{figure}[htbp]
\includegraphics[width=\linewidth]{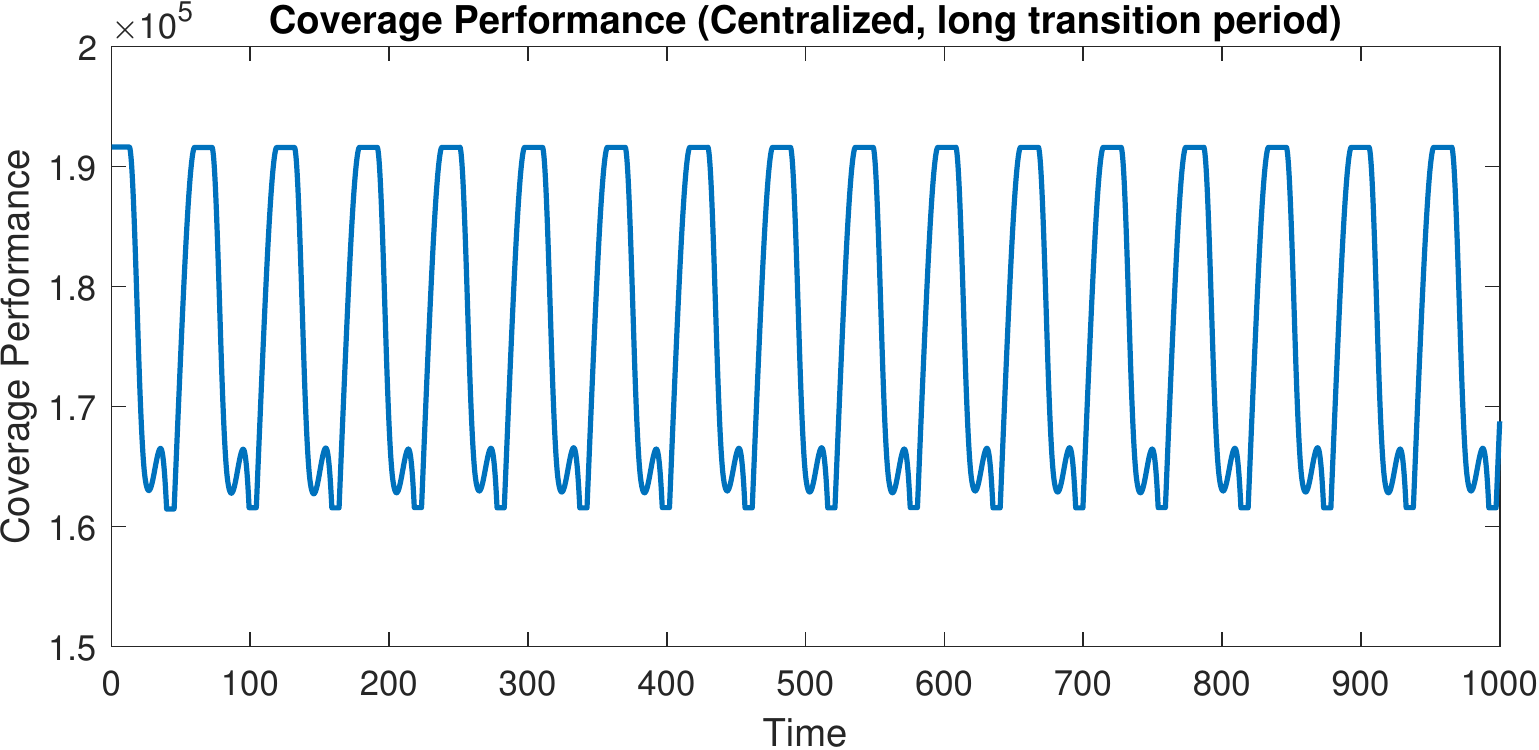}
\caption{Performance with Centralized Approach}
\label{f3_c}
\end{figure}

Another set of simulation is done with 6 agents. In this case, the parameters are chosen as $\alpha = 0.0005$, $\beta=0.0005$, $c=0.025$, and $\bar{v}=100$.
%Surprisingly, we also identify some cases that the decentralized approach is better than the centralized approach. 
The coverage performance over time for the centralized approach and decentralized approach is depicted in Fig.~\ref{f3_c} and Fig.~\ref{f4}, respectively. The average coverage performance over time is $262946$ for the centralized approach and $253278$ for the decentralized approach. In this case, the average coverage performance improvement is $3.28\%$ by the centralized approach. When the number of agents increases, the centralized approach keeps a minimum coverage performance above $25000$. However, the performance is critically compromised for the decentralized approach when more agents compete for the charging stations, as shown in Fig.~\ref{f4}. %This case occurs because the charging time is too short. it is not worth the effort for agents switching to It 
\begin{figure}[htbp]
\includegraphics[width=\linewidth]{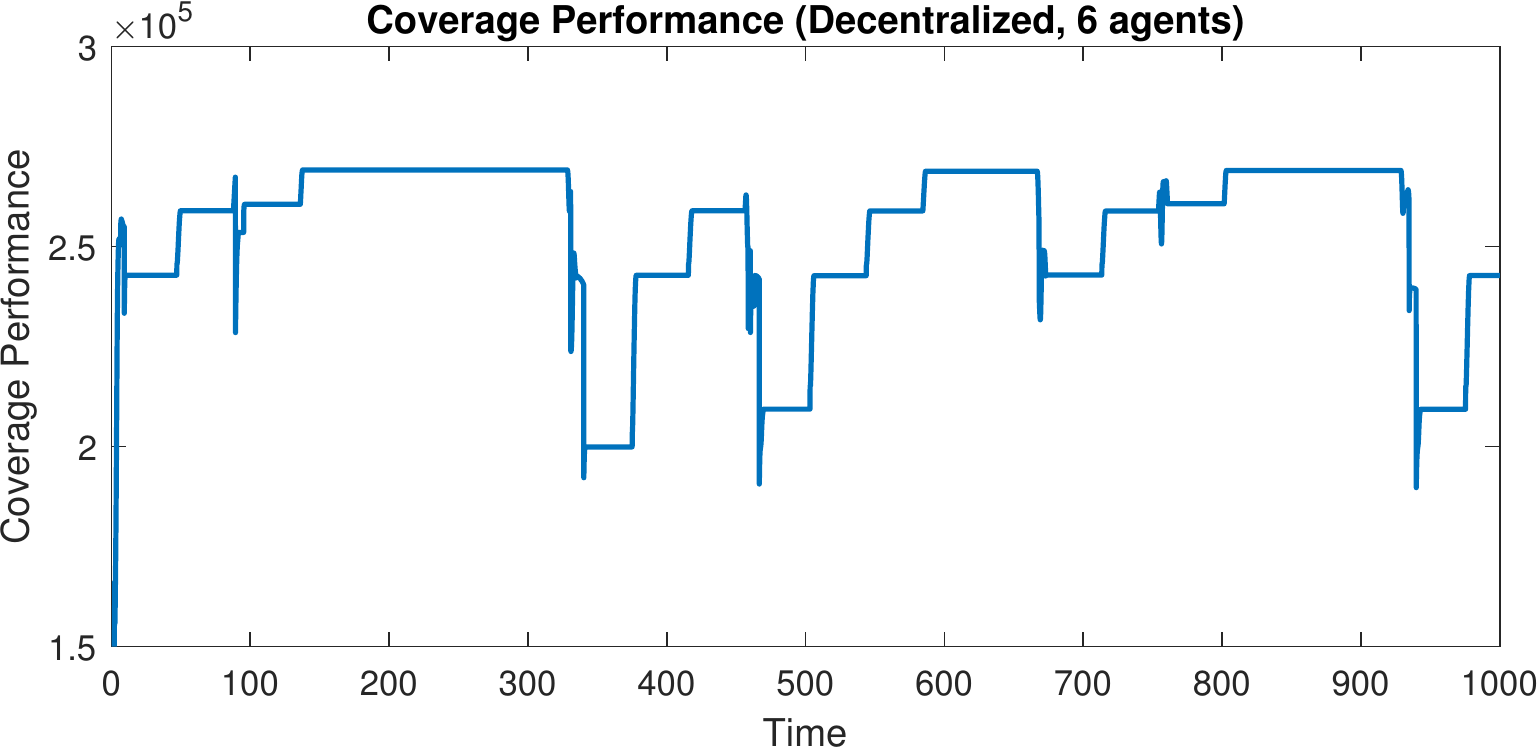}
\caption{Performance with Decentralized Approach}
\label{f4}
\end{figure}
\section{Conclusions}
In this paper, we propose a centralized near-optimal solution to the multi-agent coverage problem with energy constrained agents. The performance between the centralized approach and decentralized approach is compared. It shows that the centralized approach in general produces better average coverage performance than the decentralized approach. In addition, the performance gap between the OCV formations and the OCH formations of the centralized approach is much smaller than that of the decentralized approach. 
\bibliographystyle{IEEEtran}
\bibliography{sample}

\end{document}